\documentclass[12pt]{amsart}

\usepackage{amsmath,amssymb,amsfonts}
\usepackage{xcolor}
\usepackage{hyperref}

\usepackage{amsrefs}
\hypersetup{
  pdftitle={Scalar Curvature Flexibility in the Riemannian Burnett Compactness Class},
  pdfauthor={Jingbo Wan}
}

\newtheorem{theorem}{Theorem}[section]
\newtheorem{proposition}[theorem]{Proposition}
\newtheorem{lemma}[theorem]{Lemma}
\newtheorem{corollary}[theorem]{Corollary}
\theoremstyle{definition}
\newtheorem{definition}[theorem]{Definition}
\theoremstyle{remark}
\newtheorem{remark}[theorem]{Remark}

\newcommand{\Scal}{\operatorname{Scal}}
\newcommand{\Int}{\operatorname{Int}}
\newcommand{\TT}{\mathcal{T}}
\newcommand{\MM}{\mathcal{M}}
\newcommand{\BB}{\mathcal{B}}
\newcommand{\R}{\mathbb{R}}
\newcommand\pr{\partial}
\newcommand{\dd}{\,d}
\newcommand{\Met}{\operatorname{Met}}
\newcommand{\Spec}{\operatorname{Spec}}
\newcommand{\supp}{\operatorname{supp}}

\title[Scalar Curvature Flexibility]{Scalar Curvature Flexibility in the Riemannian Burnett Compactness Class}

\author{Jingbo Wan}
\address[Jingbo Wan]{Laboratoire Jacques-Louis Lions, Sorbonne Universit\'e, 4 place Jussieu, 75005 Paris, France}
\email{jingbo.wan@sorbonne-universite.fr}
\date{}

\begin{document}

\begin{abstract}
Let $M$ be a connected smooth $n$-manifold without boundary, where $n\geq3$, and let $\kappa\in\R$, with $\kappa\leq0$ if $M$ is open. We prove that every smooth Riemannian metric $g_0$ with $\Scal_{g_0}\geq\kappa$ is a locally uniform limit of smooth Riemannian metrics $g_i$ with $\Scal_{g_i}=\kappa$ that are locally uniformly bounded in $W^{1,\infty}$. As a corollary, combining this with Gromov's $C^0$-stability theorem, we obtain the perhaps surprising identity
\[
  \overline{\{g:\Scal_g=\kappa\}}^{\,C^{0,\alpha}_{\mathrm{loc}}}
  =\{g:\Scal_g\geq\kappa\}, \quad \forall \alpha \in(0,1).
\]
The restriction $\alpha<1$ is sharp. At $\kappa=0$, this proves and strengthens the Riemannian reverse-Burnett conjecture of Huneau and Luk.
\end{abstract}
\maketitle

\section{Introduction}

Burnett's two conjectures concern high-frequency limits of Lorentzian vacuum spacetimes. Their first-order compactness assumptions are local uniform convergence of the spacetime metrics and local uniform boundedness of their first derivatives \cites{Burnett,HuneauLuk}. Consequently, the first derivatives converge weak-* in $L^\infty_{\mathrm{loc}}$ to those of the limiting metric. We study the corresponding compactness regime for Riemannian metrics. It is natural for scalar curvature because, after multiplication by the volume density, scalar curvature has the local form
\[
  \sqrt{\det g}\,\Scal_g=\pr_kV^k(g,\pr g)+F(g,\pr g),
\]
where $V$ and $F$ depend smoothly on $g$ and are respectively linear and quadratic in $\pr g$; see, for example, \cite{LeeLeFloch}*{Section~2.1}. The divergence term is continuous in the sense of distributions under this convergence, whereas the quadratic term need not be. We determine the resulting relaxation of the equation $\Scal_g=\kappa$.

Let $M$ be a connected smooth manifold without boundary, and let $\Met(M)$ denote the space of smooth Riemannian metrics on $M$. For $\kappa\in\R$, set
\begin{align*}
  \MM_\kappa(M)&:=\{g\in\Met(M):\Scal_g=\kappa\},\\
  \MM_{\geq\kappa}(M)&:=\{g\in\Met(M):\Scal_g\geq\kappa\}.
\end{align*}
Throughout, all manifolds are assumed smooth, connected, and without boundary. A closed manifold is compact, while an open manifold is noncompact.

\begin{definition}\label{def:burnett}
Fix a smooth background $\overline g\in\Met(M)$, with Levi-Civita connection $\overline\nabla$. We say that smooth metrics $g_i$ converge to a smooth metric $g$ in the local Riemannian Burnett compactness class if, for every $K\Subset M$,
\[
  g_i\longrightarrow g\quad\text{in }C^0(K),\qquad
  \sup_i\|\overline\nabla g_i\|_{L^\infty(K)}<\infty.
\]
\end{definition}

This definition is independent of $\overline g$, since the difference of two smooth connections is a zeroth-order operator. Moreover, Lemma~\ref{lem:holder-compactness} gives
\[
  \overline\nabla g_i\stackrel{*}{\rightharpoonup}\overline\nabla g
  \quad\text{in }L^\infty_{\mathrm{loc}}.
\]

All local norms below may be taken with respect to $\overline g$. By convergence in the strong $C^{0,1}_{\mathrm{loc}}$-topology, we mean $C^0_{\mathrm{loc}}$-convergence together with strong $L^\infty_{\mathrm{loc}}$-convergence of the first derivatives.

\begin{theorem}\label{thm:main}
Let $M^n$ be connected, $n\geq3$, and let $\kappa\in\R$. If $M$ is closed or $\kappa\leq0$, every $g_0\in\MM_{\geq\kappa}(M)$ is the limit in the local Riemannian Burnett compactness class of a sequence $\widehat g_i\in\MM_\kappa(M)$. In particular, for every $K\Subset M$, the sequence may be chosen so that
\begin{equation}\label{eq:main-uniform-first-derivative}
  \sup_i\|\widehat g_i\|_{W^{1,\infty}(K;\overline g)}\leq C_K.
\end{equation}
If $\kappa<0$ and $g_0$ is complete, the metrics $\widehat g_i$ may be chosen complete.

This flexibility is sharp: if $g_i\in\MM_\kappa(M)$ converges to a smooth metric $g$ in $C^0_{\mathrm{loc}}$ and $\overline\nabla g_i\to\overline\nabla g$ strongly in $L^2_{\mathrm{loc}}$, then $g\in\MM_\kappa(M)$. In particular,
\begin{equation}\label{eq:lipschitz-closure}
  \overline{\MM_\kappa(M)}^{\,C^{0,1}_{\mathrm{loc}}}=\MM_\kappa(M).
\end{equation}
\end{theorem}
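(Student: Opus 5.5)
The proof splits into the flexibility statement and the sharpness statement. Sharpness is the easy half. Assume $g_i\in\MM_\kappa(M)$ converges to $g$ in $C^0_{\mathrm{loc}}$ and $\overline\nabla g_i\to\overline\nabla g$ in $L^2_{\mathrm{loc}}$. I will use the local divergence form
\[
\sqrt{\det g}\,\Scal_g=\pr_kV^k(g,\pr g)+F(g,\pr g).
\]
\begin{itemize}
\item In a chart, $V^k(g_i,\pr g_i)$ is linear in $\pr g_i$ with coefficients continuous in $g_i$. Hence $V^k(g_i,\pr g_i)\to V^k(g,\pr g)$ in $L^2_{\mathrm{loc}}$, and $\pr_kV^k$ converges in distributions.
\item $F(g_i,\pr g_i)$ is quadratic in $\pr g_i$ with continuous coefficients, so strong $L^2$ convergence of $\pr g_i$ gives $F(g_i,\pr g_i)\to F(g,\pr g)$ in $L^1_{\mathrm{loc}}$.
\item Also $\kappa\sqrt{\det g_i}\to\kappa\sqrt{\det g}$ uniformly.
\end{itemize}
Passing to the limit in $\sqrt{\det g_i}\,\Scal_{g_i}=\kappa\sqrt{\det g_i}$ in the sense of distributions gives $\sqrt{\det g}\,\Scal_g=\kappa\sqrt{\det g}$. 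Since $g$ is smooth, this holds pointwise, so $g\in\MM_\kappa(M)$. The identity \eqref{eq:lipschitz-closure} follows, because strong $C^{0,1}_{\mathrm{loc}}$ convergence implies $L^2_{\mathrm{loc}}$ convergence of the derivatives.

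For flexibility, I will construct $\widehat g_i$ in two stages. The first stage is an oscillatory perturbation that lowers scalar curvature by a prescribed amount. The quadratic term $F$ is not weakly continuous, and I will exploit this. I write $g_i=g_0+\epsilon_i h_i$ with
\[
h_i=a(x)\cos(x\cdot\xi/\epsilon_i)\,\omega\otimes\omega,
\]
built locally with a partition of unity, and suitably corrected by lower-order terms. Here $\omega$ is a unit covector chosen so that the principal symbol of the linearized scalar curvature annihilates the leading oscillation. This is possible for $n\geq3$: for example, take $\omega\perp\xi$, which kills the $\pr^2 h$ leading term of $\delta\delta h-\Delta\operatorname{tr}h$, up to trace-free adjustments. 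Then $g_i\to g_0$ uniformly and $\pr g_i$ stays bounded. The $O(1/\epsilon_i)$ linear terms vanish to leading order, and the surviving quadratic term averages to $-c\,a(x)^2|\xi|^2$ with $c>0$. The negativity of this average is exactly the Burnett-type relaxation.

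Choosing $a^2$ proportional to $\Scal_{g_0}-\kappa\geq0$, I obtain $\Scal_{g_i}=\kappa+r_i$, where $r_i\to0$ weakly and is small in a negative Sobolev norm. Because $\Scal_{g_0}-\kappa$ need not have a smooth square root, I will replace it by a slightly larger function $f\geq\Scal_{g_0}-\kappa$ with smooth $\sqrt f$. This leaves $\Scal_{g_i}\geq\kappa-\delta_i$ with controlled error.

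The second stage, and the main obstacle, is correcting to exact scalar curvature $\kappa$ while preserving uniform convergence and Lipschitz bounds. The error $r_i$ is $O(1)$ in $L^\infty$ and only small weakly, so a naive perturbation argument fails. I will try two routes.
\begin{itemize}
\item \emph{Conformal route.} Look for $\widehat g_i=u_i^{4/(n-2)}g_i$ solving
\[
-c_n\Delta_{g_i}u+\Scal_{g_i}u=\kappa u^{(n+2)/(n-2)}.
\]
Write $u=1+v$ and note that $r_i$ is oscillatory with zero mean at scale $\epsilon_i$. Then elliptic estimates give $\|v\|_{C^{1}}\lesssim\|r_i\|_{C^{-1,\beta}}$, which is $O(\epsilon_i)$-small in $C^0$ with bounded gradient, once the first stage is refined so that $r_i=\epsilon_i\,\mathrm{div}(\text{bounded oscillatory})+o(1)$.
\item \emph{Fallback.} Iterate the oscillation step, Nash--Kuiper style, with increasing frequencies to make the error small in $L^\infty$, and then invoke Corvino-type local surjectivity of the linearized scalar curvature operator.
\end{itemize}
Invertibility of the linearized operator is where the hypotheses enter. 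On open $M$ with $\kappa\leq0$, I use sub- and supersolutions, or exhaustion. On closed $M$ with $\kappa>0$, the kernel of $-c_n\Delta-\kappa n/(n-1)$ can obstruct invertibility, so I will instead use Corvino's deformation of $\Scal$ away from static metrics, after generically perturbing $g_0$. For $\kappa<0$ and complete $g_0$, completeness is preserved because $\widehat g_i$ is uniformly equivalent to $g_0$. Finally, a diagonal argument over an exhaustion by compact sets $K$ gives \eqref{eq:main-uniform-first-derivative}.
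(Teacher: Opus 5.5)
Your sharpness argument is the paper's (divergence form, $V$ converging in $L^2_{\mathrm{loc}}$, $F$ in $L^1_{\mathrm{loc}}$), and it is fine. The flexibility half has a genuine gap at the correction step, most seriously for $\kappa=0$.

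\textbf{The case $\kappa=0$.} On a closed manifold, $L_hu=0$ has a positive solution only if $\lambda_1(L_h)=0$. A positive subsolution forces $\lambda_1\leq0$, and a positive supersolution forces $\lambda_1\geq0$. Equivalently, the linearization $-c_n\Delta_h$ kills constants, so $-c_n\Delta_hv=-e(1+v)$ requires $\int_Me(1+v)\dd\mu_h=0$. Neither elliptic estimates nor barriers can produce this codimension-one condition. For your $g_i$ the condition even fails: $\Scal_{g_i}=\Scal_{g_0}-f+r_i+o(1)$ with $r_i\rightharpoonup0$, $f\geq\Scal_{g_0}$ and $f\not\equiv\Scal_{g_0}$. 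Testing the Rayleigh quotient with $u\equiv1$ then gives $\lambda_1(L_{g_i})<0$ for large $i$, so no scalar-flat metric is conformal to $g_i$.

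The missing idea is a free parameter. Proposition~\ref{prop:global-path} gives a path $g_{i,s}$ with $\Scal_{g_{i,s}}$ uniformly close to $(1-s^2)\Scal_{g_0}-s^2/i$. Continuity of $\lambda_1$ then yields $s_i$ with $\lambda_1(L_{g_{i,s_i}})=0$, and a Rayleigh-quotient comparison gives $s_i\to1$. The first eigenfunction is the conformal factor.

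On open $M$, a positive solution of $L_hu=0$ requires $\mathcal Q_h\geq0$. Its convergence to $1$ requires a Liouville property, since the limit metric may carry nonconstant positive harmonic functions. ``Exhaustion'' supplies neither. The paper first conformally parabolicizes $g_0$ away from a large compact set. It then uses a weighted gap $\int_M\beta u^2\dd\mu_q\leq\mathcal Q_q[u]$ to locate a threshold $s_i\to1$ with $\mathcal Q_{q_{i,s_i}}\geq0$. Finally, parabolic cutoffs force every limit of the positive solutions to be constant.

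\textbf{The $O(1)$ error.} Your single-scale oscillation leaves an error $r_i$ of size $O(1)$ in $L^\infty$. The crossing arguments, Lemma~\ref{lem:positive-solutions} and Lemma~\ref{lem:conformal-regularity} all need $\Scal$ uniformly close to its target. For $\kappa<0$, constant barriers fail because $\Scal_{g_i}-\kappa$ has no sign. The paper removes $r_i$ pointwise with the second corrugation term $N^{-2}\Int^2(\sigma)(Nx_n)$ of Aliouane--Rifford--Theilli\`ere. Its $\pr_{nn}^2$-derivative contributes the $O(1)$ loop $\sigma$, chosen to cancel the oscillating remainder. The scalar-curvature error is then $O(N^{-1})$ in $C^0$, while the corrugation stays bounded in $C^1$. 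Your $C^{1,\beta}$ contraction with divergence-form Schauder estimates is plausible for $\kappa<0$ on closed $M$, where the linearization is coercive, but it does not cover the other cases.

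\textbf{The case $\kappa>0$.} A Corvino or Fischer--Marsden deformation around $g_i$ has no uniform constants, because $D\Scal_{g_i}^*$ contains $\operatorname{Ric}_{g_i}$, which is of size $\epsilon_i^{-1}$ here. The paper instead rescales to $t_mg_0$ with $t_m\uparrow1$. This preserves $\Scal\geq\kappa$ and makes $\kappa/(n-1)\notin\Spec(-\Delta)$; note that the linearization is $-c_n(\Delta+\kappa/(n-1))$, not $-c_n\Delta-\kappa n/(n-1)$. The paper then runs a $C^0$ contraction that needs only $C^0$ control of $h_i$.

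\textbf{The oscillation direction.} The tensor $\omega\otimes\omega$ with $\omega\perp\xi$ is not in the kernel of the principal symbol $|\xi|^2\operatorname{tr}h-h(\xi,\xi)$. You need a trace-free $h$ with $h(\xi,\cdot)=0$, such as $\omega_1\otimes\omega_1-\omega_2\otimes\omega_2$; this is where $n\geq3$ enters, and it is the paper's $\gamma_1=-\gamma_2$.
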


The theorem has the following consequence for Gromov's $C^0$ program. Gromov asked which consequences of scalar-curvature inequalities remain visible after the derivatives of the metric have been forgotten \cite{GromovLectures}*{Section~3.1}. His $C^0$-stability theorem gives \cite{Gromov}
\begin{equation*}
  \overline{\MM_\kappa(M)}^{\,C^0_{\mathrm{loc}}}\subseteq\MM_{\geq\kappa}(M),
\end{equation*}
and Bamler later gave a Ricci-flow proof \cite{Bamler}. This gives the forward inclusion below, while Theorem~\ref{thm:main} and Lemma~\ref{lem:holder-compactness} give the sharp reverse inclusion for every H\"older exponent $\alpha<1$.

\begin{corollary}\label{cor:closure}
If $M$ is closed or $\kappa\leq0$, then, for every $\alpha\in(0,1)$,
\begin{equation*}
  \overline{\MM_\kappa(M)}^{\,C^{0,\alpha}_{\mathrm{loc}}}=\MM_{\geq\kappa}(M),
\end{equation*}
and the same equality holds in the $C^0_{\mathrm{loc}}$-topology. When $M$ is closed, the topologies may be taken globally.
\end{corollary}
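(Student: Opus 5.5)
We prove the two inclusions separately. The forward inclusion $\overline{\MM_\kappa(M)}^{\,C^{0,\alpha}_{\mathrm{loc}}}\subseteq\MM_{\geq\kappa}(M)$ will follow directly from Gromov's $C^0$-stability theorem. Let $g$ be a smooth metric, understood as an element of $\Met(M)$ since both sides of the identity are subsets of $\Met(M)$. Suppose $g$ is a $C^{0,\alpha}_{\mathrm{loc}}$-limit of a sequence $g_i\in\MM_\kappa(M)$. Since $C^{0,\alpha}_{\mathrm{loc}}$-convergence implies $C^0_{\mathrm{loc}}$-convergence, we get
\[
\overline{\MM_\kappa(M)}^{\,C^{0,\alpha}_{\mathrm{loc}}}\subseteq\overline{\MM_\kappa(M)}^{\,C^0_{\mathrm{loc}}}\subseteq\MM_{\geq\kappa}(M),
\]
where the second inclusion is \cite{Gromov}. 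This argument already gives one half of the $C^0_{\mathrm{loc}}$ statement as well.

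For the reverse inclusion, fix $g_0\in\MM_{\geq\kappa}(M)$. If $M$ is closed or $\kappa\le 0$, Theorem~\ref{thm:main} provides $\widehat g_i\in\MM_\kappa(M)$ with $\widehat g_i\to g_0$ in $C^0(K)$ and $\sup_i\|\widehat g_i\|_{W^{1,\infty}(K)}\le C_K$ for every $K\Subset M$. We then need to upgrade this to convergence in $C^{0,\alpha}(K)$ for each $\alpha<1$. This is presumably the content of Lemma~\ref{lem:holder-compactness}; it also follows from an elementary interpolation. Cover $K$ by finitely many coordinate balls and set $h_i=\widehat g_i-g_0$. In each ball, $h_i$ is uniformly Lipschitz with constant $L$, depending on $C_K$ and $g_0$, and $\|h_i\|_{C^0}=:\varepsilon_i\to0$. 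For points $x,y$ in the ball,
\[
|h_i(x)-h_i(y)|\le\min\{2\varepsilon_i,\,L|x-y|\}\le (2\varepsilon_i)^{1-\alpha}L^{\alpha}|x-y|^{\alpha}.
\]
Hence $[h_i]_{C^{0,\alpha}}\le C\varepsilon_i^{1-\alpha}\to0$. Pairs of points that lie in no common ball are at distance bounded below by a Lebesgue number of the cover, and for these the $C^0$ bound suffices. This gives $\widehat g_i\to g_0$ in $C^{0,\alpha}_{\mathrm{loc}}$, so $g_0\in\overline{\MM_\kappa(M)}^{\,C^{0,\alpha}_{\mathrm{loc}}}$. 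The same sequence shows $g_0\in\overline{\MM_\kappa(M)}^{\,C^0_{\mathrm{loc}}}$, which completes the $C^0_{\mathrm{loc}}$ equality.

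When $M$ is closed, take $K=M$. Local and global convergence then coincide, and the estimates above are global. We note that the hypothesis ``$M$ closed or $\kappa\le0$'' is used only to invoke Theorem~\ref{thm:main}. We also note why the statement stops short of $\alpha=1$: the interpolation factor $\varepsilon_i^{1-\alpha}$ loses its decay exactly at $\alpha=1$, consistent with \eqref{eq:lipschitz-closure}. The substantive input is entirely Theorem~\ref{thm:main}. Given that theorem, the only step requiring care is the Hölder interpolation, and that step is routine.
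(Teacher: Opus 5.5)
Your proposal is correct and matches the paper's argument: Gromov's $C^0$-stability theorem gives the forward inclusion, and Theorem~\ref{thm:main} combined with the H\"older interpolation of Lemma~\ref{lem:holder-compactness} gives the reverse inclusion, where your estimate $\min\{2\varepsilon_i,L|x-y|\}\le(2\varepsilon_i)^{1-\alpha}L^{\alpha}|x-y|^{\alpha}$ is just that lemma's interpolation written out. One small point: take the Lipschitz bound on a compact $K'\supset K$ that contains the closures of your coordinate balls, not on $K$ itself; Theorem~\ref{thm:main} supplies this bound for every compact set.
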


Thus the full relaxation permitted by Gromov's theorem already occurs under local $L^\infty$ control of the first derivatives. The scalar-curvature defect is carried by their failure to converge strongly.

We next explain the relation with General Relativity. The forward Burnett conjecture asserts that a high-frequency limit of vacuum spacetimes solves the Einstein--massless Vlasov system. The reverse conjecture asks whether suitable solutions of the Einstein--massless Vlasov system arise as high-frequency limits of vacuum spacetimes. The massless Vlasov stress-energy tensor $T$ satisfies the weak energy condition, namely $T(X,X)\geq0$ for every timelike vector $X$.

Huneau and Luk proved the forward conjecture under $\mathbb U(1)$ symmetry in elliptic gauge \cite{HuneauLukTrilinear} and, without symmetry, in generalized wave coordinates under quantitative high-frequency assumptions \cite{HuneauLukWave}. Guerra and Teixeira da Costa strengthened the elliptic-gauge result by removing the higher-derivative assumptions \cite{GuerraTeixeira}. In the reverse direction, Huneau and Luk constructed vacuum approximations of generic local-in-time small-data solutions to the Einstein--multiple-null-dust system under polarized $\mathbb U(1)$ symmetry \cite{HuneauLukBackreaction}, and subsequently treated suitable small, localized Einstein--massless Vlasov solutions under $\mathbb U(1)$ symmetry in elliptic gauge \cite{HuneauLukVlasov}. Without symmetry, Touati proved a reverse result for generic sufficiently regular solutions with a finite but arbitrary number of null-dust families \cite{TouatiReverse}.

At the level of initial data, a vacuum data set $(g,k)$ satisfies the constraint equations
\begin{equation*}\Scal_g-|k|g^2+(\operatorname{tr}gk)^2=0,\qquad \operatorname{div}gk-d(\operatorname{tr}gk)=0.\end{equation*}
The corresponding reverse problem for the constraint equations was studied by Touati, who constructed high-frequency vacuum initial data $(g_\lambda,k_\lambda)$ close to null-dust data on $\R^3$ \cite{TouatiConstraints}. His construction uses a coupled high-frequency expansion for both components of the initial data, designed to match a spacetime geometric-optics ansatz; imposing $k_\lambda\equiv0$ is therefore not a direct specialization of that construction. When $k\equiv0$, the vacuum constraint reduces to $\Scal_g=0$, whereas the time-symmetric weak energy condition reduces to $\Scal_g\geq0$. Huneau and Luk therefore asked whether every smooth metric of nonnegative scalar curvature is a $C^0_{\mathrm{loc}}$-limit of smooth scalar-flat metrics \cite{HuneauLuk}*{Conjecture~8.10}. This may be viewed as the time-symmetric Riemannian analogue of the reverse Burnett conjecture. Taking $\kappa=0$ in Theorem~\ref{thm:main} gives the following strengthening in Burnett's first-order compactness class.

\begin{corollary}\label{cor:huneau-luk}
Every smooth metric of nonnegative scalar curvature on a connected manifold without boundary of dimension $n\geq3$ is a limit in the local Riemannian Burnett compactness class of smooth scalar-flat metrics.
\end{corollary}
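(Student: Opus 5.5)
This corollary is the case $\kappa=0$ of Theorem~\ref{thm:main}, so the plan is to check that the hypotheses of that theorem hold for $\kappa=0$ and then read off its conclusion. No new analysis is needed.

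Let $M$ be a connected manifold without boundary of dimension $n\geq3$, and let $g_0\in\Met(M)$ satisfy $\Scal_{g_0}\geq0$, that is, $g_0\in\MM_{\geq0}(M)$. Theorem~\ref{thm:main} needs two hypotheses: $n\geq3$, and either $M$ is closed or $\kappa\leq0$. With $\kappa=0$ the second holds in both cases. If $M$ is closed it holds trivially. If $M$ is open, the standing condition $\kappa\leq0$ is satisfied because $\kappa=0$. So no case distinction on the topology of $M$ is required.

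Theorem~\ref{thm:main} then gives a sequence $\widehat g_i\in\MM_0(M)$, that is, smooth metrics with $\Scal_{\widehat g_i}=0$. These converge to $g_0$ in the local Riemannian Burnett compactness class of Definition~\ref{def:burnett}. Concretely, for every $K\Subset M$ we have $\widehat g_i\to g_0$ in $C^0(K)$ and $\sup_i\|\overline\nabla\widehat g_i\|_{L^\infty(K)}<\infty$; the uniform $W^{1,\infty}$ bound \eqref{eq:main-uniform-first-derivative} gives the same thing. This is exactly the assertion of the corollary.

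As a remark, this also answers \cite{HuneauLuk}*{Conjecture~8.10}, since convergence in the Burnett class implies $C^0_{\mathrm{loc}}$-convergence. The only real obstacle is the hypothesis check, and for $\kappa=0$ it is immediate.
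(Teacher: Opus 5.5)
Your proposal is correct and matches the paper, which obtains the corollary simply by taking $\kappa=0$ in Theorem~\ref{thm:main}. Since $\kappa=0\leq0$, the hypothesis ``$M$ closed or $\kappa\leq0$'' holds for every $M$, exactly as you checked.
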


We now describe the proof. The prescribed scalar-curvature problem has a classical flexible side, including the work of Kazdan and Warner \cite{KazdanWarner} and the local deformation theory of Fischer and Marsden \cite{FischerMarsden}. In the $C^0$ setting used here, Lohkamp proved that an arbitrarily small perturbation can lower scalar curvature by a prescribed positive function up to an arbitrarily small error \cite{Lohkamp}. Aliouane, Rifford, and Theilli\`ere obtained a new proof by mixed convex integration \cite{ART}. We use their two-scale corrugation and scalar-curvature cancellation to construct the compactly supported parameterized deformation in Section~\ref{sec:path}, together with the uniform first-derivative bounds needed here.

Given a metric $g$ and a positive function $k$, Proposition~\ref{prop:global-path} constructs metrics $(g_s)_{s\in[0,1]}$ satisfying
\[
  \Scal_{g_s}\approx\Scal_g-s^2k
\]
while remaining arbitrarily close to $g$ in $C^0$ and locally bounded in $W^{1,\infty}$. Its endpoint is the almost-prescription theorem.

Almost prescription does not by itself produce a metric in $\MM_\kappa(M)$. We therefore make a global conformal change $\widehat g=u^{\frac{4}{n-2}}h$ that removes the remaining scalar-curvature error. A small error need not admit a small conformal correction because the linearized conformal operator may have a kernel or lack a uniform inverse. Once $h_i$ is locally bounded in $W^{1,\infty}$ and $u_i\to1$ locally uniformly, the conformal equation and the interior $W^{2,p}$ estimate give $u_i\to1$ in $C^1_{\mathrm{loc}}$; see Lemma \ref{lem:conformal-regularity}. Thus the correction remains in the local Riemannian Burnett compactness class.

Sections~\ref{sec:closed-flat} and~\ref{sec:open-flat} treat $\kappa=0$. On a closed manifold, we use the continuity of the first eigenvalue of the conformal Laplacian along the scalar-curvature decreasing path. At a suitable parameter, this eigenvalue vanishes, and its positive first eigenfunction gives an exact scalar-flat conformal metric. On an open manifold, a first eigenvalue need not exist. We first make the metric parabolic without changing it near a prescribed compact set. Parabolicity then provides the conformal corrections, and a compact exhaustion gives the result on the original manifold.

For $\kappa\neq0$, only the endpoint $s=1$ of Proposition~\ref{prop:global-path} is needed. When $\kappa<0$, constant sub- and supersolutions give the conformal correction; see Section~\ref{sec:negative-case}. When $\kappa>0$ and $M$ is closed, a small rescaling avoids resonance for the linearized conformal equation, after which a contraction argument gives the correction; see Section~\ref{sec:positive-closed}.

The case $\kappa>0$ on general open manifolds remains open. The local almost-prescription remains available, but we do not know how to remove its error by a global conformal correction tending to the identity in $C^1_{\mathrm{loc}}$.

\medskip

{\it Acknowledgments.} The author would like to thank Otis Chodosh and Jonathan Luk for bringing this problem to his attention and for their interest. He thanks Jonathan Luk in particular for pointing out the significance of the uniform first-derivative bounds in relation to Burnett's compactness regime.  He also thanks Sung-Jin Oh for helpful and encouraging discussions, Man-Chun Lee for useful discussions about the proof, and Mu-Tao Wang for his continued support and encouragement. The author is supported by ERC-2023 AdG 101141855 BLaHST.

\section{First-order compactness}\label{sec:first-order}

We use the convention $\Delta_g=\operatorname{div}_g\nabla_g$, so that $-\Delta_g$ is nonnegative. Set
\[
  c_n:=\frac{4(n-1)}{n-2},\qquad L_g:=-c_n\Delta_g+\Scal_g.
\]
For every positive function $u$, the conformal metric $\widehat g=u^{4/(n-2)}g$ satisfies
\begin{equation}\label{eq:conformal-formula}
  \Scal_{\widehat g}=u^{-\frac{n+2}{n-2}}L_gu.
\end{equation}

We record two simple consequences of a local first-derivative bound.

\begin{lemma}\label{lem:holder-compactness}
Let $T_i\to T$ be smooth tensors converging in $C^0_{\mathrm{loc}}$, and
\[
  \sup_i\|\overline\nabla T_i\|_{L^\infty(K)}<\infty
  \quad\text{for every }K\Subset M.
\]
Then $T_i\to T$ in $C^{0,\alpha}_{\mathrm{loc}}$ for every $\alpha\in(0,1)$, and $\overline\nabla T_i\rightharpoonup^*\overline\nabla T$ in $L^\infty_{\mathrm{loc}}$.
\end{lemma}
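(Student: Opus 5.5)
The argument is local, so I would fix $K\Subset M$ and cover it by finitely many coordinate balls $B$ whose closures lie in charts. In a chart, $\overline\nabla$ differs from the coordinate derivative $\partial$ by Christoffel symbols of $\overline g$, which are bounded on compact sets. Since $\sup_i\|T_i\|_{C^0(K)}<\infty$ by uniform convergence, the hypothesis gives $\sup_i\|\partial T_i\|_{L^\infty(B)}\le C$ for the coordinate components on each ball. I will work with components throughout and take H\"older seminorms in coordinates; these are equivalent to the $\overline g$-based ones on compact sets.

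\emph{H\"older convergence.} Let $L:=\sup_i\|\partial T_i\|_{L^\infty(B)}$. Since $T_i$ is smooth, the mean value theorem on a convex ball gives $|T_i(x)-T_i(y)|\le L|x-y|$. Passing to the pointwise limit, $T$ is also $L$-Lipschitz, so $S_i:=T_i-T$ is $2L$-Lipschitz. I would then interpolate. For $x\neq y$,
\[
  \frac{|S_i(x)-S_i(y)|}{|x-y|^\alpha}\le \min\bigl(2\|S_i\|_{C^0}|x-y|^{-\alpha},\,2L|x-y|^{1-\alpha}\bigr)\le 2\|S_i\|_{C^0}^{1-\alpha}L^{\alpha}\cdot C,
\]
where the last bound comes from splitting according to whether $|x-y|$ is less or greater than $\|S_i\|_{C^0}/L$. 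Hence $[S_i]_{C^{0,\alpha}(B)}\le C\|S_i\|_{C^0(B)}^{1-\alpha}\to0$ for every $\alpha<1$. Finitely many balls cover $K$. For pairs of points lying in different balls, the distance is bounded below by a Lebesgue number of the cover, so the $C^0$ bound alone controls those quotients. This gives convergence in $C^{0,\alpha}(K)$.

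\emph{Weak-$*$ convergence of derivatives.} The family $\overline\nabla T_i$ is bounded in $L^\infty(K)=(L^1(K))^*$. By Banach--Alaoglu, and since $L^1$ is separable, every subsequence has a further weak-$*$ convergent subsequence. To identify the limit, pair with smooth compactly supported test tensors $\phi$ and integrate by parts:
\[
  \int\langle\overline\nabla T_i,\phi\rangle\,dV_{\overline g}=-\int\langle T_i,\overline{\operatorname{div}}\,\phi\rangle\,dV_{\overline g}\to-\int\langle T,\overline{\operatorname{div}}\,\phi\rangle\,dV_{\overline g}.
\]
By the uniform convergence, every weak-$*$ limit therefore equals the distributional derivative $\overline\nabla T$. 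In particular $\overline\nabla T\in L^\infty_{\mathrm{loc}}$, which is consistent with $T$ being Lipschitz. Test functions are dense in $L^1$, and the sequence is bounded in $L^\infty$, so uniqueness of the limit upgrades subsequential convergence to convergence of the whole sequence. Nothing here is a real obstacle. The only point requiring care is the justification that $T$ is Lipschitz, and hence that the interpolation applies, when $T$ is merely a $C^0$ limit; the pointwise-limit argument above handles this.
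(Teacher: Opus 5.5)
Your proof is correct and follows the same route as the paper. Local interpolation between the $C^0$ convergence and the uniform Lipschitz bound gives $C^{0,\alpha}$ convergence, and integration by parts against test tensors, combined with the uniform $L^\infty$ bound and density in $L^1$, gives weak-$*$ convergence of $\overline\nabla T_i$. You do the interpolation by hand and allow a merely continuous limit $T$, whereas the paper assumes $T$ smooth and cites the inequality on $K\Subset K'$. One wording fix: the Lebesgue-number lower bound applies to pairs of points not contained in a common ball, not to points that merely lie in different balls.
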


\begin{proof}
Fix $K\Subset K'\Subset M$. The interpolation inequality in a finite collection of coordinate charts gives
\[
  \|T_i-T\|_{C^{0,\alpha}(K)}\leq C_{K,K'}\|T_i-T\|_{C^0(K')}^{1-\alpha}\left(\|T_i-T\|_{C^0(K')}+\|\overline\nabla(T_i-T)\|_{C^0(K')}\right)^\alpha,
\]
which tends to zero. For every smooth compactly supported test tensor $\Phi$, integration by parts gives
\[
  \int_M\langle\overline\nabla T_i,\Phi\rangle_{\overline g}\dd\mu_{\overline g}
  =\int_M\langle T_i,\overline\nabla^*\Phi\rangle_{\overline g}\dd\mu_{\overline g}
  \longrightarrow
  \int_M\langle T,\overline\nabla^*\Phi\rangle_{\overline g}\dd\mu_{\overline g}.
\]
Thus $\overline\nabla T_i\to\overline\nabla T$ distributionally. The uniform local $L^\infty$-bound and density in $L^1$ upgrade this to weak-* convergence of the complete sequence.
\end{proof}

\begin{lemma}\label{lem:conformal-regularity}
Let $h_i$ be smooth metrics converging to $h$ in $C^0_{\mathrm{loc}}$, with
\[
  \sup_i\|\overline\nabla h_i\|_{L^\infty(K)}<\infty
  \quad\text{for every }K\Subset M,
\]
and suppose that $\Scal_{h_i}\to\kappa$ in $C^0_{\mathrm{loc}}$. If positive smooth functions $u_i\to1$ in $C^0_{\mathrm{loc}}$ satisfy
\[
  L_{h_i}u_i=\kappa u_i^{\frac{n+2}{n-2}},
\]
then $u_i\to1$ in $W^{2,p}_{\mathrm{loc}}$ for every $1<p<\infty$, and hence in $C^1_{\mathrm{loc}}$. Moreover,
\[
  \sup_i\left\|u_i^{\frac{4}{n-2}}h_i\right\|_{W^{1,\infty}(K;\overline g)}<\infty
  \quad\text{for every }K\Subset M.
\]
\end{lemma}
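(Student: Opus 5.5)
We write the equation in divergence form with Lipschitz coefficients and apply interior $L^p$ estimates to $v_i:=u_i-1$.

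\textbf{The equation for $v_i$.} In local coordinates,
\[
\Delta_{h_i}u=\frac{1}{\sqrt{\det h_i}}\,\pr_a\bigl(\sqrt{\det h_i}\,h_i^{ab}\pr_b u\bigr).
\]
The coefficients $a_i^{ab}:=\sqrt{\det h_i}\,h_i^{ab}$ are uniformly Lipschitz on compact sets and converge in $C^0_{\mathrm{loc}}$ to those of $h$. Since $u_i\to1$ uniformly, they are also uniformly elliptic there. Expanding the divergence gives the nondivergence form
\[
h_i^{ab}\pr_a\pr_b u+b_i^b\pr_b u,
\]
where the lower-order coefficients $b_i$ are bounded in $L^\infty_{\mathrm{loc}}$ uniformly in $i$. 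Subtracting the equation satisfied by the constant $1$, we get
\[
-c_n\Delta_{h_i}v_i=\kappa u_i^{\frac{n+2}{n-2}}-\Scal_{h_i}u_i=:f_i .
\]
Because $u_i\to1$ and $\Scal_{h_i}\to\kappa$ in $C^0_{\mathrm{loc}}$, we have $f_i\to0$ in $C^0_{\mathrm{loc}}$, and so $\|f_i\|_{L^p(K)}\to0$ for every $K\Subset M$.

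\textbf{Interior $W^{2,p}$ estimate.} We apply the Calder\'on--Zygmund estimate for nondivergence operators with continuous leading coefficients (Gilbarg--Trudinger, Theorem~9.11) on nested balls $B\Subset B'$ in a chart:
\[
\|v_i\|_{W^{2,p}(B)}\leq C\bigl(\|v_i\|_{L^p(B')}+\|f_i\|_{L^p(B')}\bigr).
\]
The constant $C$ must be uniform in $i$. It depends only on the ellipticity bounds, the $L^\infty$ bound of $b_i$, and the modulus of continuity of $h_i^{ab}$. The uniform Lipschitz bound on $h_i$ gives a uniform modulus, so $C$ is uniform. This uniformity is the one point that needs care. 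Both terms on the right tend to zero, hence $v_i\to0$ in $W^{2,p}_{\mathrm{loc}}$. Taking $p>n$ and using the Morrey embedding gives $v_i\to0$ in $C^{1}_{\mathrm{loc}}$.

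\textbf{$W^{1,\infty}$ bound for the conformal metrics.} We have
\[
\overline\nabla\bigl(u_i^{4/(n-2)}h_i\bigr)=\tfrac{4}{n-2}\,u_i^{\frac{6-n}{n-2}}\,du_i\otimes h_i+u_i^{4/(n-2)}\,\overline\nabla h_i .
\]
On each compact set, $u_i$ is bounded above and below, $du_i\to0$ uniformly, and both $h_i$ and $\overline\nabla h_i$ are uniformly bounded. Hence the right-hand side is uniformly bounded, which gives the stated bound.
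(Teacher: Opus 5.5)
Your proposal is correct and follows the paper's proof essentially step for step. Both arguments set $v_i=u_i-1$, apply the interior estimate of Gilbarg--Trudinger, Theorem~9.11, with constants made uniform by the locally uniform Lipschitz bound on $h_i$, take $p>n$, and differentiate $u_i^{4/(n-2)}h_i$ directly. One small correction: the uniform ellipticity of $\sqrt{\det h_i}\,h_i^{ab}$ comes from $h_i\to h$ in $C^0_{\mathrm{loc}}$ with $h$ positive definite, not from $u_i\to1$.
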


\begin{proof}
Set $v_i:=u_i-1$. Equation $L_{h_i}u_i=\kappa u_i^{\frac{n+2}{n-2}}$ gives
\[
  \Delta_{h_i}v_i=\frac{1}{c_n}\left(\Scal_{h_i}u_i-\kappa u_i^{\frac{n+2}{n-2}}\right)=:f_i \to0 \quad \text{in } C^0_{\mathrm{loc}}.
\]
Fix $K\Subset K'$ in a coordinate chart. The operators $\Delta_{h_i}$ are uniformly elliptic on $K'$, with uniformly Lipschitz leading coefficients and uniformly bounded first-order coefficients. Hence the interior estimate \cite{GT}*{Theorem~9.11} gives
\[
  \|v_i\|_{W^{2,p}(K)}\leq C\left(\|v_i\|_{L^p(K')}+\|f_i\|_{L^p(K')}\right)\longrightarrow0.
\]
Taking $p>n$ gives $u_i\to1$ in $C^1_{\mathrm{loc}}$. Finally,
\[
  \overline\nabla\left(u_i^{\frac{4}{n-2}}h_i\right)
  =\frac{4}{n-2}u_i^{\frac{6-n}{n-2}}\dd u_i\otimes h_i
  +u_i^{\frac{4}{n-2}}\overline\nabla h_i,
\]
which is locally uniformly bounded.
\end{proof}

\begin{proof}[Proof of the sharpness assertion in Theorem~\ref{thm:main}]
We use the invariant divergence form of scalar curvature relative to $\overline g$; see, for example, \cite{LeeLeFloch}*{Section~2.1}. Set
\[
  C^k_{ij}(g):=\frac12g^{k\ell}\left(\overline\nabla_i g_{j\ell}+\overline\nabla_jg_{i\ell}-\overline\nabla_\ell g_{ij}\right),\qquad
  \vartheta_g:=\frac{\dd\mu_g}{\dd\mu_{\overline g}},
\]
so that $C(g)=\nabla^g-\overline\nabla$. Define
\begin{align*}
  V^k(g,\overline\nabla g)&:=\vartheta_g\left(g^{ij}C^k_{ij}(g)-g^{ik}C^j_{ij}(g)\right),\\
  F(g,\overline\nabla g)&:=\vartheta_gg^{ij}(\operatorname{Ric}_{\overline g})_{ij}-\overline\nabla_k(\vartheta_gg^{ij})C^k_{ij}(g)+\overline\nabla_j(\vartheta_gg^{ij})C^k_{ik}(g)\\
  &\quad+\vartheta_gg^{ij}\left(C^k_{ij}(g)C^\ell_{k\ell}(g)-C^\ell_{ik}(g)C^k_{j\ell}(g)\right).
\end{align*}
The curvature-difference formula gives
\begin{equation}\label{eq:scalar-density-divergence}
  \vartheta_g\Scal_g=\overline\nabla_kV^k(g,\overline\nabla g)+F(g,\overline\nabla g).
\end{equation}
Here $V$ is smooth in $g$ and linear in $\overline\nabla g$, while, apart from the smooth background-curvature term, $F$ is smooth in $g$ and quadratic in $\overline\nabla g$.

Set $\vartheta_i:=\vartheta_{g_i}$. Since $g_i\to g$ locally uniformly and $\overline\nabla g_i\to\overline\nabla g$ strongly in $L^2_{\mathrm{loc}}$, the preceding formulas give
\[
  \vartheta_i\longrightarrow\vartheta_g\quad\text{in }C^0_{\mathrm{loc}},\qquad
  V(g_i,\overline\nabla g_i)\longrightarrow V(g,\overline\nabla g)\quad\text{in }L^2_{\mathrm{loc}},
\]
and
\[
  F(g_i,\overline\nabla g_i)\longrightarrow F(g,\overline\nabla g)
  \quad\text{in }L^1_{\mathrm{loc}}.
\]
For every $\phi\in C_c^\infty(M)$, equation \eqref{eq:scalar-density-divergence} and $\Scal_{g_i}=\kappa$ give
\[
  -\int_MV^k(g_i,\overline\nabla g_i)\overline\nabla_k\phi\dd\mu_{\overline g}
  +\int_MF(g_i,\overline\nabla g_i)\phi\dd\mu_{\overline g}
  =\kappa\int_M\vartheta_i\phi\dd\mu_{\overline g}.
\]
Passing to the limit gives $\vartheta_g\Scal_g=\kappa\vartheta_g$ in the sense of distributions. Since $g$ is smooth and $\vartheta_g>0$, we obtain $\Scal_g=\kappa$. Finally, strong $C^{0,1}_{\mathrm{loc}}$-convergence implies strong $W^{1,2}_{\mathrm{loc}}$-convergence, which proves \eqref{eq:lipschitz-closure}.
\end{proof}

\section{Scalar curvature decreasing paths}\label{sec:path}

Convex integration originated in Nash's work on $C^1$ isometric embeddings \cite{Nash} and was developed by Gromov into the general framework of the $h$-principle \cite{GromovPDR}; for an introduction to these ideas, see the survey of De Lellis and Sz\'ekelyhidi \cite{DeLellisSzekelyhidi}. Standard convex integration relies on an ampleness condition. The scalar-curvature relation, however, is not ample because scalar curvature is affine in the second derivatives of the metric. The mixed convex integration of Aliouane, Rifford, and Theilli\`ere overcomes this obstruction by coupling oscillations at two scales: the first generates a signed quadratic term, and the second cancels the leading affine term \cite{ART}. In this section, we give the compactly supported, parameterized construction needed below and keep track of its first derivatives. The perturbations tend to zero in $C^0$ and remain bounded in $W^{1,\infty}_{\mathrm{loc}}$, as required by the Riemannian Burnett compactness class.

Let $U\Subset\R^n$, where $n\geq3$, and let $h$ be a smooth metric on $U$.  Applying Gram--Schmidt to $\{\pr_j\}_{j=1}^{n-1}$ gives an $h$-orthonormal frame $\{X^j\}_{j=1}^{n-1}$.  Let $X^n$ be the $h$-unit normal to $\operatorname{span}\{X^j\}_{j=1}^{n-1}$, oriented so that $\alpha:=dx^n(X^n)>0$. Write
\[
  X^n = \alpha\pr_n+\sum_{j=1}^{n-1}\beta_j\pr_j.
\]
Let $\{\theta^j\}_{j=1}^n$ be the dual coframe, so that $h=\sum_{i=1}^n(\theta^i)^2$. For a smooth map $F=(F^1,F^2):U\to\R^2$, define
\begin{equation}\label{eq:perturbed-metric}
  h_F := e^{2F^1}(\theta^1)^2 +e^{2F^2}(\theta^2)^2 +\sum_{i=3}^n(\theta^i)^2.
\end{equation}
We collect the $\pr_n$-free data of $F$ up to second order in
\[
  \TT(F)  := \left( F,\, (\pr_jF)_{j=1}^{n-1},\, (\pr_j\pr_kF)_{1\leq j\leq k\leq n-1} \right).
\]

We first record the local scalar curvature identity underlying the construction; see also \cite{ART}*{Lemma~3.4}. For readers' convenience, the computation is included to make the dependence on the background metric explicit.

\begin{lemma}\label{lem:local-identity}
There are smooth functions $\Psi,\Psi_1,\Psi_2$, depending on $(x,\TT(F))$, such that $\Psi(x,0)=0$ and
\begin{align}
  \Scal_{h_F}-\Scal_h ={}& -2\alpha^2\sum_{k=1}^2\pr_{nn}^2F^k -4\alpha\sum_{j=1}^{n-1}\sum_{k=1}^2 \beta_j\pr_{nj}^2F^k \notag\\
  &-2\alpha^2\left( (\pr_nF^1)^2+(\pr_nF^2)^2 +\pr_nF^1\pr_nF^2 \right)\notag\\
  &+\sum_{k=1}^2 \Psi_k\bigl(x,\TT(F)\bigr)\pr_nF^k +\Psi\bigl(x,\TT(F)\bigr).
  \label{eq:local-identity}
\end{align}
These functions depend smoothly on $h$ when $h$ varies in a smooth family of metrics.
\end{lemma}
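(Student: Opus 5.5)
We compute the scalar curvature of the diagonal-in-coframe metric $h_F$ using Cartan's structure equations, isolating every term that involves a $\pr_n$-derivative of $F$. Set $\widetilde\theta^k:=e^{F^k}\theta^k$ for $k=1,2$ and $\widetilde\theta^i:=\theta^i$ for $i\geq3$, so that $h_F=\sum_i(\widetilde\theta^i)^2$. Write $d\theta^i=-\tfrac12\sum c^i_{jk}\theta^j\wedge\theta^k$, with smooth structure functions $c^i_{jk}$ depending only on $h$. Then
\[
  d\widetilde\theta^k=e^{F^k}\left(dF^k\wedge\theta^k+d\theta^k\right),
  \qquad dF^k=\sum_j X^j(F^k)\,\theta^j .
\]
The structure functions of $\{\widetilde\theta^i\}$ are therefore explicit: each is a smooth function of $(x,F)$ times either $c^i_{jk}$ or a frame derivative $X^j(F^k)$. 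Since $X^j\in\operatorname{span}\{\pr_1,\dots,\pr_{n-1}\}$ for $j\leq n-1$, the derivatives $X^j F$ with $j<n$ belong to $\TT(F)$. Only $X^nF=\alpha\pr_nF+\sum_j\beta_j\pr_jF$ carries $\pr_n$-information.

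We will use the standard formula for scalar curvature in terms of structure functions of an orthonormal coframe. Schematically,
\[
  \Scal=2\sum \widetilde X^i(\widetilde c)+Q(\widetilde c),
\]
where $Q$ is an explicit quadratic form and $\widetilde X^i=e^{-F^i}X^i$ for $i=1,2$, $\widetilde X^i=X^i$ otherwise. The steps are as follows.

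First, the second-order terms come from $\widetilde X^i$ applied to structure functions containing $X^jF$. The only contributions with a $\pr_n$-derivative are $X^n X^n F^k$ and $X^j X^n F^k$ with $j<n$. The latter are $\pr_n$-linear modulo $\TT(F)$, so they can be absorbed into $\Psi_k\pr_nF^k$ plus a commutator, since $[X^j,X^n]$ may contain $\pr_n$. The term $X^nX^nF^k$ expands as
\[
  \alpha^2\pr_{nn}F^k+2\alpha\sum_j\beta_j\pr_{nj}F^k+(\text{terms in }\pr_nF\text{ linear, and }\TT(F)).
\]
We will check that its coefficient is $-2$ for each $k$. This is the familiar computation for a conformal-type deformation in the directions $\theta^1,\theta^2$: $\Scal$ of $e^{2f}(\theta^1)^2+\dots$ contains $-2(\theta^1\text{-normal Laplacian})f$. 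Comparing with the warped-product case $h=\delta$, $F=F(x^n)$ pins the constant. This also explains why only $\pr_{nn}$ and $\pr_{nj}$, $j<n$, appear, giving the first line of \eqref{eq:local-identity}. The factor $4$ in front of $\beta_j$ comes from the cross term $2\alpha\beta_j$ in $(X^n)^2$, multiplied by $2$.

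Second, the quadratic terms in $\pr_nF$ come from $Q$ together with the Leibniz terms $X^n(e^{\pm F})$. For these we can freeze $h=\delta$ and $F=F(x^n)$. The metric $e^{2F^1}dx_1^2+e^{2F^2}dx_2^2+\sum_{i\geq3}dx_i^2$ is a product of a line with a "doubly warped" flat metric. Its scalar curvature is
\[
  -2\left(F^{1\prime\prime}+F^{2\prime\prime}+(F^{1\prime})^2+(F^{2\prime})^2+F^{1\prime}F^{2\prime}\right),
\]
which matches the coefficient structure, with $\alpha^2$ restored by the frame normalization $X^n=\alpha\pr_n+\dots$. Every remaining term is at most linear in $\pr_nF$ with coefficients in $(x,\TT(F))$, giving $\Psi_k$, or free of $\pr_nF$, giving $\Psi$. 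We have $\Psi(x,0)=0$ because $h_0=h$. Smooth dependence on $h$ follows since Gram--Schmidt, $\alpha$, $\beta_j$ and the $c^i_{jk}$ depend smoothly on $h$.

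The main obstacle is bookkeeping: ensuring that no term quadratic in $\pr_nF$ other than those displayed survives, and that there are no $\pr_nF^1\,\pr_nF^2$ coefficients besides $-2\alpha^2$. Terms of the form $(X^nF)^2$ produce $\beta$-cross terms $\beta_j\pr_jF\,\pr_nF$, which are linear in $\pr_nF$ and harmless. The exact quadratic coefficients are then fixed by the model computation above, since they are tensorial in $X^nF$ at a point.
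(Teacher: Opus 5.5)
Your Cartan-frame strategy is a legitimate alternative to the paper's argument. The paper instead freezes affine coordinates $y$ at $x$ with $\pr_{y_i}=X^i(x)$ and compares $h_F$ with the diagonal metric $e^{2F^i}\delta_{ij}$. However, your step handling mixed second derivatives is wrong as written, and it is exactly the point the lemma is about.

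You claim that terms $X^jX^nF^k$ with $j<n$ occur and are ``$\pr_n$-linear modulo $\TT(F)$''. They are not. Writing $X^j=\sum_{l<n}a^j_l\pr_l$, one has $X^jX^nF^k=\alpha\sum_{l<n}a^j_l\,\pr^2_{ln}F^k$ plus terms that are either linear in $\pr_nF$ or depend only on $(x,\TT(F))$. The derivative $\pr^2_{ln}F^k$ is neither a component of $\TT(F)$ nor of the form $\Psi_k(x,\TT(F))\pr_nF^k$. Passing to a commutator does not help, since $X^jX^nF=X^nX^jF+[X^j,X^n]F$ still contains the mixed derivative $X^nX^jF$. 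Any such term with nonzero coefficient would shift the coefficient of $\pr^2_{nl}F^k$ away from $-4\alpha\beta_l$, which the lemma asserts exactly. So you must show that these terms never arise, not that they can be absorbed.

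The missing ingredient is the precise form of the frame formula. For an orthonormal frame $\{e_i\}$ one has $\Scal=-2\sum_ie_i(\operatorname{div}e_i)+Q$, where $\operatorname{div}e_i=\sum_jc^j_{ji}$ and $Q$ is a universal quadratic form in the structure functions. Thus only traced structure functions are differentiated, and only along their own frame vector. Since $d\mu_{h_F}=e^{F^1+F^2}d\mu_h$, you get
$\operatorname{div}_{h_F}\widetilde X^i=e^{-F^i}\bigl(\operatorname{div}_hX^i+X^i(F^1+F^2)-X^iF^i\bigr)$ for $i=1,2$, and
$\operatorname{div}_{h_F}X^i=\operatorname{div}_hX^i+X^i(F^1+F^2)$ for $i\geq3$.

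Hence the only second derivatives are $(X^i)^2F^k$. These belong to $\TT(F)$ for $i<n$. For $i=n$, because $\widetilde X^n=X^n$, you get exactly $-2(X^n)^2(F^1+F^2)$ plus an $F$-independent term. This also fixes the constant $-2$ without any model computation. It is the frame analogue of the paper's observation that the diagonal model contains only pure derivatives $\pr^2_{y_ky_k}F^a$.

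Your treatment of the quadratic terms becomes rigorous once you note that the only structure functions involving $\pr_nF$ are $\widetilde c^k_{nk}=-X^nF^k+c^k_{nk}$, $k=1,2$. These carry no exponential weights since $F^n=0$. Hence the part of $Q$ quadratic in $\pr_nF$ is $\alpha^2$ times a universal form, and your flat model evaluates it correctly. The Leibniz terms $X^n(e^{\pm F})$ that you mention do not contribute here.
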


\begin{proof}
Fix $x\in U$. Choose affine coordinates $y_1,\ldots,y_n$ centered at $x$ such that $\pr_{y_i}=X^i(x)$ at the origin. Write $\theta^q=\sum_{i=1}^n\theta_i^q\,dy_i$, and extend $F=(F^1,F^2)$ by setting $F^3=\cdots=F^n=0$. Then
\[
  \theta_i^q(x)=\delta_{iq},\qquad h_{ij}=\sum_q\theta_i^q\theta_j^q,\qquad (h_F)_{ij}=\sum_qe^{2F^q}\theta_i^q\theta_j^q.
\]

Consider the diagonal metric $\widehat h_{ij}=e^{2F^i}\delta_{ij}$. Since $\widehat h^{ij}=e^{-2F^i}\delta_{ij}$ and $\pr_k\widehat h_{ij}=2e^{2F^i}\delta_{ij}\pr_kF^i$, we have
\begin{align*}
  \widehat\Gamma^k_{ij} &= \sum_\ell\frac12\widehat h^{k\ell}\left(\pr_i\widehat h_{j\ell}+\pr_j\widehat h_{i\ell}-\pr_\ell\widehat h_{ij}\right)\\
  &=\delta_{jk}\pr_iF^k+\delta_{ik}\pr_jF^k-\delta_{ij}e^{2(F^i-F^k)}\pr_kF^i,\\
  \sum_{i,j,k}\widehat h^{ij}\pr_k\widehat\Gamma^k_{ij} &=\sum_{i,k}e^{-2F^i}\pr_k\left(2\delta_{ik}\pr_iF^i-e^{2(F^i-F^k)}\pr_kF^i\right),\\
  -\sum_{i,j,k}\widehat h^{ij}\pr_j\widehat\Gamma^k_{ik} &=-\sum_{i,k}e^{-2F^i}\pr_{ii}^2F^k.
\end{align*}
It follows that
\begin{align}
  \Scal_{\widehat h} ={}& \sum_{i,j,k}\widehat h^{ij}\left(\pr_k\widehat\Gamma^k_{ij}-\pr_j\widehat\Gamma^k_{ik}+\sum_\ell\widehat\Gamma^k_{ij}\widehat\Gamma^\ell_{k\ell}-\sum_\ell\widehat\Gamma^\ell_{ik}\widehat\Gamma^k_{j\ell}\right)\notag\\
  ={}&-2\sum_{a=1}^2\sum_{k\neq a}e^{-2F^k}\pr_{kk}^2F^a-2\sum_{a=1}^2\sum_{k\neq a}e^{-2F^k}(\pr_kF^a)^2 \label{eq:diagonal-scalar-path}\\
  &-2\sum_{k\notin\{1,2\}}\pr_kF^1\pr_kF^2+2e^{-2F^1}\pr_1F^1\pr_1F^2+2e^{-2F^2}\pr_2F^2\pr_2F^1. \notag
\end{align}

We now compare $h_F$ with $\widehat h$. At $x$, we have $(h_F)_{ij}=\widehat h_{ij}$ and $(h_F)^{ij}=\widehat h^{ij}=e^{-2F^i}\delta_{ij}$. Differentiating and using $\theta_i^q(x)=\delta_{iq}$ gives
\begin{align}
  \pr_r(h_F)_{ij}-\pr_r\widehat h_{ij} ={}&\sum_qe^{2F^q}\left(\pr_r\theta_i^q\,\delta_{jq}+\delta_{iq}\pr_r\theta_j^q\right), \label{eq:first-coframe-error}\\
  \pr_{rs}^2(h_F)_{ij}-\pr_{rs}^2\widehat h_{ij} ={}&2\sum_qe^{2F^q}\pr_rF^q\left(\pr_s\theta_i^q\,\delta_{jq}+\delta_{iq}\pr_s\theta_j^q\right)\notag\\
  &+2\sum_qe^{2F^q}\pr_sF^q\left(\pr_r\theta_i^q\,\delta_{jq}+\delta_{iq}\pr_r\theta_j^q\right)\notag\\
  &+\sum_qe^{2F^q}\left(\pr_{rs}^2\theta_i^q\,\delta_{jq}+\pr_r\theta_i^q\pr_s\theta_j^q+\pr_s\theta_i^q\pr_r\theta_j^q+\delta_{iq}\pr_{rs}^2\theta_j^q\right). \label{eq:second-coframe-error}
\end{align}
For any metric $g$,
\begin{align*}
  \pr_r\Gamma^k_{ij} ={}&\frac12\sum_\ell\pr_rg^{k\ell}\left(\pr_ig_{j\ell}+\pr_jg_{i\ell}-\pr_\ell g_{ij}\right)+\frac12\sum_\ell g^{k\ell}\left(\pr_{ri}^2g_{j\ell}+\pr_{rj}^2g_{i\ell}-\pr_{r\ell}^2g_{ij}\right),
\end{align*}
where $\pr_rg^{k\ell}=-\sum_{p,q}g^{kp}g^{\ell q}\pr_rg_{pq}$. Moreover,
\begin{align*}
  \Gamma^k_{ij}(h_F)-\widehat\Gamma^k_{ij} ={}&\frac12e^{-2F^k}\sum_qe^{2F^q}\Bigl(\pr_i\theta_j^q\,\delta_{kq}+\delta_{jq}\pr_i\theta_k^q+\pr_j\theta_i^q\,\delta_{kq}+\delta_{iq}\pr_j\theta_k^q\\
  &\hspace{42mm}-\pr_k\theta_i^q\,\delta_{jq}-\delta_{iq}\pr_k\theta_j^q\Bigr).
\end{align*}
Equations \eqref{eq:first-coframe-error} and \eqref{eq:second-coframe-error} show that the coframe errors contain no second derivative of $F$ and are at most linear in $\pr F$. Since $\widehat\Gamma^k_{ij}$ is linear in $\pr F$, the same is true, up to terms containing no derivative of $F$, for the differences of the differentiated and quadratic Christoffel terms. Hence $\Scal_{h_F}-\Scal_{\widehat h}$ contains no second derivative of $F$ and no term quadratic in $\pr F$. After subtracting $\Scal_h$, the derivative-free terms vanish when $F=0$. Thus all second-derivative and quadratic first-derivative terms in $\Scal_{h_F}-\Scal_h$ are exactly those in \eqref{eq:diagonal-scalar-path}.

It remains to rewrite the derivatives in the original coordinates. At $x$, we have $\pr_{y_n}=\alpha\pr_n+\sum_{j=1}^{n-1}\beta_j\pr_j$. Here $\alpha$ and $\beta_j$ denote their values at $x$ and are constant coefficients in the affine coordinate change. Therefore, for $a=1,2$,
\begin{align*}
  \pr_{y_n}F^a &=\alpha\pr_nF^a+\sum_{j=1}^{n-1}\beta_j\pr_jF^a,\\
  \pr_{y_ny_n}^2F^a &=\alpha^2\pr_{nn}^2F^a+2\alpha\sum_{j=1}^{n-1}\beta_j\pr_{nj}^2F^a+\sum_{j,k=1}^{n-1}\beta_j\beta_k\pr_{jk}^2F^a.
\end{align*}
Consequently,
\begin{align*}
  -2\sum_{a=1}^2\pr_{y_ny_n}^2F^a
  ={}-2\alpha^2\sum_{a=1}^2\pr_{nn}^2F^a-4\alpha\sum_{j=1}^{n-1}\sum_{a=1}^2\beta_j\pr_{nj}^2F^a
  -2\sum_{j,k=1}^{n-1}\sum_{a=1}^2\beta_j\beta_k\pr_{jk}^2F^a,
\end{align*}
and
\begin{align*}
  {}&(\pr_{y_n}F^1)^2+(\pr_{y_n}F^2)^2+\pr_{y_n}F^1\pr_{y_n}F^2\\
  ={}&\alpha^2\left((\pr_nF^1)^2+(\pr_nF^2)^2+\pr_nF^1\pr_nF^2\right)\\
  &+\alpha\pr_nF^1\sum_{j=1}^{n-1}\beta_j\left(2\pr_jF^1+\pr_jF^2\right)+\alpha\pr_nF^2\sum_{j=1}^{n-1}\beta_j\left(\pr_jF^1+2\pr_jF^2\right)\\
  &+\left(\sum_{j=1}^{n-1}\beta_j\pr_jF^1\right)^2+\left(\sum_{j=1}^{n-1}\beta_j\pr_jF^2\right)^2+\left(\sum_{j=1}^{n-1}\beta_j\pr_jF^1\right)\left(\sum_{k=1}^{n-1}\beta_k\pr_kF^2\right).
\end{align*}

For $q\leq n-1$, the vector $\pr_{y_q}$ is a linear combination of $\pr_1,\ldots,\pr_{n-1}$. Hence the remaining terms depend only on $\TT(F)$, except for terms linear in $\pr_nF^1,\pr_nF^2$. Collecting them gives \eqref{eq:local-identity}. Taking $F\equiv0$ gives $\Psi(x,0)=0$, and smooth dependence on $h$ follows from that of the frame.
\end{proof}

For a smooth $1$-periodic loop $\zeta:\R\to\R^2$, let $\bar\zeta:=\int_0^1\zeta(t)\dd t$, and denote by $\Int(\zeta)$ the unique $1$-periodic loop of mean zero satisfying
\[
  \pr_t\Int(\zeta)=\zeta-\bar\zeta.
\]
We write $\Int^2:=\Int\circ\Int$. Thus, if $\bar\zeta=0$, then $\pr_t\Int(\zeta)=\zeta$ and $\pr_{tt}^2\Int^2(\zeta)=\zeta$. The next lemma gives the construction in a compactly supported, parameterized form. It is the local counterpart of \cite{ART}*{Proposition~3.5} and is based on the two-scale corrugation in \cite{ART}*{Proposition~2.2}.

\begin{lemma}\label{lem:supported-deformation}
Let $(h_s)_{s\in[0,1]}$ be a smooth family of metrics on $U\Subset\R^n$, and let $a\in C^\infty([0,1]\times U)$ satisfy
\[
  \supp_x a:=\overline{\{x\in U:a(s,x)\neq0\text{ for some }s\in[0,1]\}}\Subset U.
\]
For every $\varepsilon,\eta>0$, there is a smooth family of metrics $(\widetilde h_s)_{s\in[0,1]}$ such that
\begin{align}
  \sup_{s\in[0,1]}
  \|\widetilde h_s-h_s\|_{C^0}
  &<\varepsilon,
  \label{eq:local-c0}\\
  \sup_{(s,x)\in[0,1]\times U}
  \left|
    \Scal_{\widetilde h_s}(x)
    -\Scal_{h_s}(x)
    +a(s,x)^2
  \right|
  &<\eta,
  \label{eq:local-scal}\\
  \widetilde h_s
  &=
  h_s
  \quad\text{on }U\setminus\supp_x a.
  \notag
\end{align}
If $a(0,\cdot)=0$, then $\widetilde h_0=h_0$. Moreover, if $\Lambda^{-1}\delta\leq h_s\leq\Lambda\delta$, $\sup_s\|h_s\|_{C^1(U)}\leq H$, and $\|a\|_{C^0([0,1]\times U)}\leq A$, then the output may be chosen so that
\begin{equation}\label{eq:local-uniform-first-derivative}
  \sup_{s\in[0,1]}\|\widetilde h_s\|_{W^{1,\infty}(U)}\leq C(U,\Lambda,H,A),
\end{equation}
\end{lemma}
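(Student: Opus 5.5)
We would construct $\widetilde h_s:=(h_s)_{F_s}$ via \eqref{eq:perturbed-metric}, with the frame $\{X^j\}$, $\alpha_s$, $\beta_{j,s}$ built from $h_s$, and with $F_s$ a two-scale corrugation in the $x^n$ direction at a large frequency $N$:
\[
  F_s(x)=\frac1N\,\Int(\zeta)\bigl(s,x,Nx^n\bigr)+\frac1{N^2}\,\Int^2(w)\bigl(s,x,Nx^n\bigr),
\]
where $\zeta(s,x,\cdot)$ and $w(s,x,\cdot)$ are smooth mean-zero $1$-periodic loops in $\R^2$. Both vanish for $x\notin\supp_xa$ and both vanish when $a(s,\cdot)=0$. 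This gives $\widetilde h_s=h_s$ off $\supp_x a$ and $\widetilde h_0=h_0$ when $a(0,\cdot)=0$. Moreover $F_s=O(1/N)$ in $C^0$, which gives \eqref{eq:local-c0} for $N$ large.

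\textbf{Choice of loops.} We substitute $F_s$ into Lemma~\ref{lem:local-identity}, applied with the smooth dependence on $h_s$.
\begin{itemize}
  \item The dangerous order-$N$ term is $-2\alpha^2\pr^2_{nn}(F^1+F^2)$, together with the mixed term $-4\alpha\beta_j\pr^2_{nj}(F^1+F^2)$. We kill both by imposing $\zeta^1+\zeta^2\equiv0$ identically in $(s,x,t)$.
  \item We take $\zeta^1=-\zeta^2=\frac{a(s,x)}{\alpha_s(x)}\cos(2\pi t)$. Then $(\pr_nF^1)^2+(\pr_nF^2)^2+\pr_nF^1\pr_nF^2=(\zeta^1)^2+O(1/N)$, and the quadratic term becomes $-2\alpha^2(\zeta^1)^2=-a^2-a^2\cos(4\pi t)$. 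Its mean is exactly $-a^2$, which is the required shift.
  \item The term $\Psi_k(x,\TT(F))\pr_nF^k$ has $\TT(F)=O(1/N)$, since $\TT$ contains only tangential derivatives. It therefore equals $\Psi_k(x,0)\zeta^k+O(1/N)$, which is an $O(1)$ mean-zero oscillation in $t$.
  \item The remaining term satisfies $\Psi(x,\TT(F))=O(1/N)$ because $\Psi(x,0)=0$.
\end{itemize}

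\textbf{Second scale.} The second scale cancels the $O(1)$ oscillations. We have $-2\alpha^2\pr^2_{nn}$ of the $N^{-2}\Int^2(w)$ term equal to $-2\alpha^2(w^1+w^2)+O(1/N)$, while its first derivatives are $O(1/N)$. We therefore choose
\[
  w^1=w^2=\frac{1}{4\alpha^2}\Bigl(-a^2\cos(4\pi t)+\sum_k\Psi_k(x,0)\,\zeta^k\Bigr),
\]
which has mean zero in $t$ and vanishes where $a$ does. Collecting terms gives
\[
  \Scal_{\widetilde h_s}-\Scal_{h_s}=-a^2+O(1/N),
\]
where the $O(1/N)$ collects $x$-derivatives of the loops falling on the slow variables, the $O(1/N)$ contributions of the second scale to first derivatives, and the Taylor errors in $\Psi$ and $\Psi_k$.

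\textbf{Uniformity and the first-derivative bound.} All constants are uniform over the compact set $[0,1]\times\supp_x a$, so choosing $N$ large gives \eqref{eq:local-scal} for all $s$ simultaneously. For \eqref{eq:local-uniform-first-derivative} we write $\pr\widetilde h_s$ as $e^{2F}$ times ($\pr\theta$ plus $\pr F\cdot\theta$). Here $\pr\theta$ is controlled by $\Lambda$ and $H$ through Gram--Schmidt. Also $\pr F=\zeta+O(1/N)$ with $|\zeta|\le A/\alpha$, and $\alpha$ is bounded below in terms of $\Lambda$. So the bound depends only on $(U,\Lambda,H,A)$ once $N$ exceeds a threshold. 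The finer data, such as $\Psi_k(x,0)$ and higher derivatives of $h_s$ and $a$, enter only the $O(1/N)$ terms and hence only that threshold.

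\textbf{Main obstacle.} We expect the main difficulty to be the error bookkeeping: verifying that every term produced by slow $x$-dependence, including $\pr_j$ hitting $\zeta(x,\cdot)$ inside $\pr_n$ and $\pr_{nn}$, is $O(1/N)$ uniformly in $s$. The identity $\zeta^1+\zeta^2\equiv0$, imposed pointwise in $(s,x)$, is what removes the otherwise order-$N$ mixed-derivative contributions.
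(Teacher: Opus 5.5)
Your proposal is correct and follows essentially the same route as the paper. You use the same frame perturbation $h_F$, the same first-scale loop $\gamma=(q,-q)$ with $q=\frac{a}{\alpha}\cos(2\pi t)$, and a second-scale loop whose component sum $w^1+w^2=\frac{1}{2\alpha^2}\bigl(-a^2\cos(4\pi t)+\sum_k\Psi_k(x,0)\zeta^k\bigr)$ equals the paper's $(\sigma_{s,x})_1$ in \eqref{eq:delta-loop}. The only difference is that the paper places this correction entirely in the first component, $\sigma=(\sigma_1,0)$, while you split it symmetrically; only $\sigma^1+\sigma^2$ enters at order one, so the split affects just the $O(N^{-1})$ terms.
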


\begin{proof}
Choose the frame $X^i$ and the dual coframe $\theta^i$ smoothly in $s$.  We use the notation $\alpha_s,\{\beta_{j,s}\}_{j=1}^{n-1}$ and $\Psi_s,\Psi_{1,s},\Psi_{2,s}$ for the corresponding functions in Lemma~\ref{lem:local-identity}.  On the spatial support of $a$, the function $\alpha_s(x)$ has a uniform positive lower bound.

Define $1$-periodic loops $\gamma_{s,x},\sigma_{s,x}:\R\to\R^2$ by
\[
  q_{s,x}(t) := \frac{a(s,x)}{\alpha_s(x)}\cos(2\pi t), \qquad \gamma_{s,x}(t) := \bigl(q_{s,x}(t),-q_{s,x}(t)\bigr).
\]
Set $(\sigma_{s,x})_2=0$, and define $(\sigma_{s,x})_1$ by
\begin{equation}\label{eq:delta-loop}
  (\sigma_{s,x})_1(t) := -q_{s,x}(t)^2 +\frac{1}{2\alpha_s(x)^2}\bigl[ \Psi_{1,s}(x,0)-\Psi_{2,s}(x,0) \bigr]q_{s,x}(t) +\frac{a(s,x)^2}{2\alpha_s(x)^2}.
\end{equation}
Since $\overline{q_{s,x}}=0$ and $\overline{q_{s,x}^2}=a(s,x)^2/(2\alpha_s(x)^2)$, we have $\bar\gamma_{s,x}=0$ and $\bar\sigma_{s,x}=0$. Both loops vanish wherever $a=0$.

For a large positive integer $N$, define
\begin{equation}\label{eq:convex-integration}
  F_{s,N}(x):=\frac1N\Int(\gamma_{s,x})(Nx_n)+\frac1{N^2}\Int^2(\sigma_{s,x})(Nx_n).
\end{equation}
All loop terms below are evaluated at $t=Nx_n$, and their derivatives with respect to $x$ are taken with $t$ fixed.
For $j,k\leq n-1$, differentiation gives
\begin{align*}
  \pr_jF_{s,N}&=\frac1N\pr_j\Int(\gamma_{s,x})+\frac1{N^2}\pr_j\Int^2(\sigma_{s,x}),\\
  \pr_{jk}^2F_{s,N}&=\frac1N\pr_{jk}^2\Int(\gamma_{s,x})+\frac1{N^2}\pr_{jk}^2\Int^2(\sigma_{s,x}).
\end{align*}
Differentiating with respect to $x_n$ gives, for $1\leq j\leq n-1$,
\begin{align*}
  \pr_nF_{s,N}
  ={}&\pr_t\Int(\gamma_{s,x})+\frac1N\left[\pr_n\Int(\gamma_{s,x})+\pr_t\Int^2(\sigma_{s,x})\right]+\frac1{N^2}\pr_n\Int^2(\sigma_{s,x})\\
  ={}&\gamma_{s,x}+\frac1N\left[\pr_n\Int(\gamma_{s,x})+\Int(\sigma_{s,x})\right]+\frac1{N^2}\pr_n\Int^2(\sigma_{s,x}),\\
  \pr_{nj}^2F_{s,N}
  ={}&\pr_{jt}^2\Int(\gamma_{s,x})+\frac1N\left[\pr_{nj}^2\Int(\gamma_{s,x})+\pr_{jt}^2\Int^2(\sigma_{s,x})\right]+\frac1{N^2}\pr_{nj}^2\Int^2(\sigma_{s,x})\\
  ={}&\pr_j\gamma_{s,x}+\frac1N\left[\pr_{nj}^2\Int(\gamma_{s,x})+\pr_j\Int(\sigma_{s,x})\right]+\frac1{N^2}\pr_{nj}^2\Int^2(\sigma_{s,x}),\\
  \pr_{nn}^2F_{s,N}
  ={}&N\pr_{tt}^2\Int(\gamma_{s,x})+2\pr_{nt}^2\Int(\gamma_{s,x})+\frac1N\pr_{nn}^2\Int(\gamma_{s,x})+\pr_{tt}^2\Int^2(\sigma_{s,x})\\
  &+\frac2N\pr_{nt}^2\Int^2(\sigma_{s,x})+\frac1{N^2}\pr_{nn}^2\Int^2(\sigma_{s,x})\\
  ={}&N\pr_t\gamma_{s,x}+2\pr_n\gamma_{s,x}+\sigma_{s,x}+\frac1N\left[\pr_{nn}^2\Int(\gamma_{s,x})+2\pr_n\Int(\sigma_{s,x})\right]\\
  &+\frac1{N^2}\pr_{nn}^2\Int^2(\sigma_{s,x}).
\end{align*}
Since $t\in\R/\mathbb Z$ and $\supp_x a$ is compact, the loops and all their spatial derivatives are uniformly bounded on $[0,1]\times\supp_x a\times\R/\mathbb Z$. Consequently,
\begin{align*}
  \TT(F_{s,N}) &= O(N^{-1}),\\
  \pr_nF_{s,N} &= \gamma_{s,x}(Nx_n)+O(N^{-1}),\\
  \pr_{nj}^2F_{s,N} &= \pr_j\gamma_{s,x}(Nx_n)+O(N^{-1}), \qquad j\leq n-1,\\
  \pr_{nn}^2F_{s,N} &= N\pr_t\gamma_{s,x}(Nx_n) +2\pr_n\gamma_{s,x}(Nx_n) +\sigma_{s,x}(Nx_n) +O(N^{-1}).
\end{align*}
The first-derivative formulas show that, for every $\delta>0$, the frequency may be chosen so large that
\begin{equation}\label{eq:local-first-derivative}
  \|F_{s,N}\|_{C^0}+\sum_{j=1}^{n-1}\|\pr_jF_{s,N}\|_{C^0}+\|\pr_nF_{s,N}-\gamma_{s,x}(Nx_n)\|_{C^0}<\delta
\end{equation}
uniformly in $s$. Although this frequency may depend on the spatial derivatives of the loops, $|\gamma_{s,x}(t)|\leq C|a(s,x)|$. Taking $\delta=1$ gives
\[
  \sup_{s\in[0,1]}\|F_{s,N}\|_{C^1}\leq C\left(1+\|a\|_{C^0}\right),
\]
where $C$ is uniform for uniformly equivalent families and is independent of the higher derivatives of $h_s$ and $a$.

For the perturbation \eqref{eq:perturbed-metric},
\begin{align}
  \overline\nabla\bigl((h_s)_{F_{s,N}}-h_s\bigr)
  ={}&2\sum_{q=1}^2e^{2F_{s,N}^q}\dd F_{s,N}^q\otimes\theta^q\otimes\theta^q +\sum_{q=1}^2\bigl(e^{2F_{s,N}^q}-1\bigr)\overline\nabla(\theta^q\otimes\theta^q).
  \label{eq:local-metric-first-derivative}
\end{align}
The hypotheses of \eqref{eq:local-uniform-first-derivative} uniformly bound the coframes and their first derivatives. Thus \eqref{eq:local-first-derivative} and \eqref{eq:local-metric-first-derivative} give \eqref{eq:local-uniform-first-derivative}.

We now substitute these expansions into \eqref{eq:local-identity}.  Since $\gamma_1+\gamma_2=0$,
\[
  \pr_t\gamma_1+\pr_t\gamma_2=0, \qquad \pr_j\gamma_1+\pr_j\gamma_2=0 \quad(j=1,\ldots,n).
\]
It follows that the order-$N$ terms, the $2\pr_n\gamma$ terms, and the leading mixed second-derivative terms cancel.  Moreover, $\gamma_1^2+\gamma_2^2+\gamma_1\gamma_2 = q_{s,x}^2$. Since $\Psi_s(x,0)=0$ and $\Psi_s,\Psi_{1,s},\Psi_{2,s}$ are smooth,
\begin{align*}
  \Psi_s\bigl(x,\TT(F_{s,N})\bigr) &= O(N^{-1}),\\
  \Psi_{\ell,s}\bigl(x,\TT(F_{s,N})\bigr) &= \Psi_{\ell,s}(x,0)+O(N^{-1}), \qquad \ell=1,2.
\end{align*}
Using \eqref{eq:local-identity}, we obtain
\begin{align*}
  {}& \Scal_{(h_s)_{F_{s,N}}} -\Scal_{h_s} +a^2\\
  ={}& -2\alpha_s^2 \bigl( (\sigma_{s,x})_1+(\sigma_{s,x})_2 \bigr) -2\alpha_s^2 \bigl( q_{s,x}^2+(-q_{s,x})^2-q_{s,x}^2 \bigr)\\
  &+\Psi_{1,s}(x,0)q_{s,x} -\Psi_{2,s}(x,0)q_{s,x} +a^2 +O(N^{-1})\\
  ={}& -2\alpha_s^2(\sigma_{s,x})_1 -2\alpha_s^2q_{s,x}^2 +\bigl[ \Psi_{1,s}(x,0)-\Psi_{2,s}(x,0) \bigr]q_{s,x} +a^2+O(N^{-1})\\
  ={}& O(N^{-1}),
\end{align*}
where the last equality follows from \eqref{eq:delta-loop}. Equations \eqref{eq:convex-integration} and \eqref{eq:perturbed-metric} therefore give constants independent of $N$ such that
\[
  \sup_{(s,x)\in[0,1]\times U}\left|\Scal_{(h_s)_{F_{s,N}}}-\Scal_{h_s}+a^2\right|\leq\frac{C}{N},\qquad \sup_{s\in[0,1]}\|(h_s)_{F_{s,N}}-h_s\|_{C^0}\leq\frac{C'}{N}.
\]
Choose $N$ sufficiently large and set $\widetilde h_s:=(h_s)_{F_{s,N}}$. This proves \eqref{eq:local-c0} and \eqref{eq:local-scal}. Outside $\supp_x a$, both loops vanish, so $F_{s,N}=0$; if $a(0,\cdot)=0$, then $F_{0,N}=0$. The remaining conclusions follow.
\end{proof}

We now apply the local deformation successively on a locally finite family of coordinate charts.  The following proposition is the parameterized version of \cite{ART}*{Theorem~1.1} needed here.  Its unparameterized form was first proved by Lohkamp \cite{Lohkamp}.

\begin{proposition}\label{prop:global-path}
Let $(M,g)$ be a smooth Riemannian $n$-manifold, and let $k\in C^\infty(M)$ be positive.  Given continuous functions $\varepsilon,\eta:M\to(0,\infty)$, there is a smooth family of metrics $(g_s)_{s\in[0,1]}$, with $g_0=g$, such that
\begin{align*}
  |g_s-g|_g(x) &< \varepsilon(x),\\
  \left| \Scal_{g_s}(x)-\Scal_g(x)+s^2k(x) \right| &< \eta(x)
\end{align*}
for every $s\in[0,1]$ and $x\in M$.
Moreover, suppose that $g_i$ are locally uniformly equivalent and bounded in $C^1_{\mathrm{loc}}$, and that $k_i>0$ are bounded in $C^0_{\mathrm{loc}}$. For arbitrary positive continuous $\varepsilon_i,\eta_i$, the corresponding paths may be chosen so that, for every $K\Subset M$,
\begin{equation}\label{eq:global-first-derivative}
  \sup_i\sup_{s\in[0,1]}\|g_{i,s}\|_{W^{1,\infty}(K;\overline g)}\leq C_K<\infty.
\end{equation}
\end{proposition}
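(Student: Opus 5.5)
We globalize Lemma~\ref{lem:supported-deformation} by an iterative patching argument over a locally finite cover. First we choose a locally finite cover of $M$ by coordinate charts $\phi_m:U'_m\to\R^n$, $m\in\mathbb N$, together with relatively compact open sets $U_m\Subset U'_m$ that still cover $M$. We also fix a smooth partition of unity $\{\rho_m\}$ with $\supp\rho_m\Subset U_m$. Put $a_m(s,x):=s\sqrt{\rho_m(x)k(x)}$; to keep $a_m$ smooth we replace $\rho_m$ by $\chi_m^2/\sum_\ell\chi_\ell^2$ with $\supp\chi_m\Subset U_m$, so that $\sqrt{\rho_m}=\chi_m/(\sum\chi_\ell^2)^{1/2}$ is smooth. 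Then $\sum_m a_m^2=s^2k$, $a_m(0,\cdot)=0$, and $\supp_x a_m\Subset U_m$.

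Next we define paths inductively. Set $g^{(0)}_s:=g$. Given the smooth family $g^{(m-1)}_s$, we apply Lemma~\ref{lem:supported-deformation} in the chart $U_m$ with $h_s=g^{(m-1)}_s$ and $a=a_m$. This gives $g^{(m)}_s$, equal to $g^{(m-1)}_s$ off $\supp_x a_m$ and with $g^{(m)}_0=g^{(m-1)}_0=g$. We use tolerances $\varepsilon_m,\eta_m>0$ chosen below. Since every $x$ lies in only finitely many $U_m$, the family $g_s:=\lim_m g^{(m)}_s$ stabilizes on each compact set after finitely many steps. It is therefore a smooth family with $g_0=g$. At $x$, let $I(x)=\{m:x\in U_m\}$, a finite set of size at most some $N(x)$ that is locally bounded. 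Adding the errors gives
\[
|\Scal_{g_s}-\Scal_g+s^2k|(x)\le\sum_{m\in I(x)}\eta_m .
\]
The $C^0$ distance to $g$ is bounded in the same way by $\sum_{m\in I(x)}\varepsilon_m$, after adjusting for the $g$-norm comparison. Hence it suffices to take $\varepsilon_m,\eta_m$ smaller than $\min_{\overline U_m}\varepsilon/(C\,N_m)$ and the analogous bound for $\eta$, where $N_m:=\max_{\overline U_m}N$ and $C$ absorbs norm equivalence. A small $\varepsilon_m$ also keeps $g^{(m)}_s$ uniformly equivalent to $g$, for example within a factor $2$.

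The main obstacle is the first-derivative bound \eqref{eq:global-first-derivative}. The constant in \eqref{eq:local-uniform-first-derivative} depends on $H$, the $C^1$ bound of the input family, and this input has already been modified in earlier charts. Iterating naively could therefore compound the constants. We handle this with the locally finite structure. At a given $x$, only the $\le N(x)$ charts containing $x$ modify the metric near $x$. After Lemma~\ref{lem:supported-deformation} is applied in $U_m$, the new $W^{1,\infty}(U_m)$ bound is $C(U_m,\Lambda,H_{m-1},A_m)$. Here $\Lambda$ is fixed (factor $2$ of $g$) and $A_m\le\sup_{\overline U_m}k^{1/2}$.

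Near any compact $K$, the $C^1$ norm of the input on $U_m$ is determined by $g$ together with the finitely many earlier modifications in charts meeting $U_m$. So the bound is obtained by composing at most $N_K$ functions $C(\cdot)$, where $N_K$ counts the charts meeting a neighborhood of $K$, and is finite. Strictly, $H$ should be read on $U_m$ only, and since earlier modifications in charts disjoint from $U_m$ do not matter, the recursion has finite depth.

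For the family version with $g_i$ and $k_i$, the same cover, partition of unity and chart order are used for every $i$. The data $\Lambda$, $H$, $A$ at each step are then bounded independently of $i$ by the hypotheses. Lemma~\ref{lem:supported-deformation} gives constants independent of $\varepsilon,\eta$, that is, independent of $N$, so the composed bounds are uniform in $i$. The tolerances $\varepsilon_i,\eta_i$ enter only through the choice of frequency, not through the constants, and this yields \eqref{eq:global-first-derivative}. Finally, $W^{1,\infty}$ norms in chart coordinates and with respect to $\overline g$ are comparable on compact sets.
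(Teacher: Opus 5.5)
Your proposal is correct and is essentially the paper's proof. It uses the same squared partition of unity $\psi_j=\rho_j(\sum_k\rho_k^2)^{-1/2}$ with $a_j=s\psi_j\sqrt k$, and successive applications of Lemma~\ref{lem:supported-deformation} with summable tolerances; the paper takes $2^{-j-1}\varepsilon(x)$ and $2^{-j-1}\eta(x)$ where you count overlaps via $N_m$. For the family, both arguments run a finite induction through the last index $J$ with $K\cap\supp\psi_J\neq\varnothing$, with constants independent of $i$ and of the frequency. The one loose point is your count of ``at most $N_K$'' compositions: chains of overlapping charts can leave any fixed neighborhood of $K$, so the depth is bounded by $J$ rather than $N_K$, but it is still finite and your conclusion is unaffected.
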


\begin{proof}
Choose a countable, locally finite cover $\{U_j\}$ of $M$ by precompact coordinate domains and nonnegative $\rho_j\in C_c^\infty(U_j)$ with no common zero. Set
\begin{equation}\label{eq:quadratic-partition}
  \psi_j:=\rho_j(\sum_k\rho_k^2)^{-1/2}\in C_c^\infty(U_j),\qquad \sum_{j=1}^\infty\psi_j^2=1.
\end{equation}

Take $g_s^{(0)}=g$ and $a_j(s,x):=s\psi_j(x)\sqrt{k(x)}$. By Lemma~\ref{lem:supported-deformation} and the compactness of $\supp\psi_j$, we may inductively choose $g_s^{(j)}=g_s^{(j-1)}$ outside $\supp\psi_j$ such that
\begin{align}
  |g_s^{(j)}-g_s^{(j-1)}|_g(x) &< 2^{-j-1}\varepsilon(x), \label{eq:metric-error-j}\\
  \left| \Scal_{g_s^{(j)}} -\Scal_{g_s^{(j-1)}} +a_j^2 \right|(x) &< 2^{-j-1}\eta(x). \label{eq:scalar-error-j}
\end{align}

Local finiteness gives a smooth $g_s:=\lim_{j\to\infty}g_s^{(j)}$. Summing \eqref{eq:metric-error-j} gives
\[
  |g_s-g|_g(x) \leq \sum_{j=1}^\infty |g_s^{(j)}-g_s^{(j-1)}|_g(x) < \sum_{j=1}^\infty2^{-j-1}\varepsilon(x) < \varepsilon(x).
\]
Since $a_j(0,x)=0$ at every step, $g_0=g$. Similarly, \eqref{eq:scalar-error-j} and \eqref{eq:quadratic-partition} give
\[
  \Scal_{g_s}-\Scal_g = -\sum_{j=1}^\infty a_j^2+E_s = -s^2k\sum_{j=1}^\infty\psi_j^2+E_s = -s^2k+E_s,
\]
where $|E_s(x)|<\sum_{j=1}^\infty2^{-j-1}\eta(x)<\eta(x)$. 

For the uniform assertion, use the same $U_j,\psi_j$ and choose a continuous function $\lambda>0$ such that $g_i\geq2\lambda\,\overline g$. At the $j$-th step also impose
\[
  |g_{i,s}^{(j)}-g_{i,s}^{(j-1)}|_{\overline g}(x)<2^{-j-1}\lambda(x),
\]
so the intermediate metrics remain locally uniformly equivalent. For $K\Subset M$, let $J:=\max\{j:K\cap\supp\psi_j\neq\varnothing\}$. On $K\cup\bigcup_{j=1}^J\overline U_j$, the hypotheses and \eqref{eq:local-uniform-first-derivative} give inductively a uniform $C^1$-bound through step $J$. All later deformations vanish on $K$, proving \eqref{eq:global-first-derivative}.
\end{proof}

We remark that the endpoint $s=1$ of Proposition~\ref{prop:global-path} is the almost-prescription theorem \cite{ART}*{Theorem~1.1}, first proved by Lohkamp \cite{Lohkamp}.

\section{Proof of Theorem~\ref{thm:main} when \texorpdfstring{$\kappa=0$}{kappa = 0}}\label{sec:zero-case}
\subsection{The closed case with \texorpdfstring{$\kappa=0$}{}}\label{sec:closed-flat}

\begin{lemma}\label{lem:positive-solutions}
Let $h_i$ be smooth metrics on a closed manifold $M$ converging uniformly to $g$. If $u_i>0$ satisfies
\begin{equation}\label{eq:zero-mode-equation}
  -c_n\Delta_{h_i}u_i+V_iu_i=0
\end{equation}
with $\|V_i\|_{C^0}\to0$, then $u_i\to1$ uniformly after rescaling.
\end{lemma}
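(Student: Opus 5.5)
Normalize $u_i$ so that $\max_M u_i=1$. The target is uniform convergence $u_i\to1$. The natural tool is a uniform Harnack inequality combined with a compactness argument.

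One caution about the hypotheses: only $C^0$ convergence of $h_i$ is assumed, with no derivative bounds. The operator $\Delta_{h_i}$ in nondivergence coordinates has first-order coefficients involving $\pr h_i$, which are uncontrolled. So I would work in divergence form. In local coordinates,
\[
  \sqrt{\det h_i}\,\Delta_{h_i}u=\pr_a\bigl(\sqrt{\det h_i}\,h_i^{ab}\pr_bu\bigr),
\]
and the coefficient matrix $A_i^{ab}=\sqrt{\det h_i}\,h_i^{ab}$ is uniformly elliptic and bounded, by uniform convergence to $g$. Equation \eqref{eq:zero-mode-equation} then becomes
\[
  \pr_a(A_i^{ab}\pr_bu_i)=c_n^{-1}\sqrt{\det h_i}\,V_iu_i,
\]
a divergence-form equation with bounded measurable coefficients and zeroth-order term $\to0$ in $L^\infty$.

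\textbf{Step 1 (Harnack).} The De Giorgi--Nash--Moser Harnack inequality (e.g.\ \cite{GT}*{Theorem~8.20}) applies to the positive solutions $u_i$ on a finite atlas of balls, with constants uniform in $i$. Chaining over the connected closed manifold gives $\max u_i\le C\min u_i$. Hence $C^{-1}\le u_i\le1$.

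\textbf{Step 2 (H\"older bound and a limit).} The De Giorgi estimate \cite{GT}*{Theorem~8.24} gives a uniform $C^{0,\beta}$ bound. Testing the equation against $u_i$ gives a uniform $W^{1,2}$ bound. By Arzel\`a--Ascoli, a subsequence converges uniformly and weakly in $W^{1,2}$ to some $u$ with $C^{-1}\le u\le1$ and $\max u=1$.

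\textbf{Step 3 (identify the limit).} This is the main subtlety, since only weak convergence of the gradients is available. Multiply the equation by $u_i$ and integrate:
\[
  c_n\int|\nabla u_i|^2_{h_i}\,d\mu_{h_i}=-\int V_iu_i^2\,d\mu_{h_i}\to0.
\]
Because $h_i\to g$ uniformly, this gives $\|\nabla u_i\|_{L^2(g)}\to0$. By weak lower semicontinuity, $\nabla u=0$, so $u$ is constant. The normalization $\max u=1$ forces $u\equiv1$. The argument does not even need to pass to the limit in the equation.

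Since every subsequence has a further subsequence converging uniformly to $1$, the full normalized sequence converges to $1$ uniformly. The energy identity in Step 3 already shows that $u_i/\bar u_i\to1$ in $W^{1,2}$. Steps 1--2 supply the compactness that upgrades this to uniform convergence, and that Harnack step is the only place where uniform ellipticity, rather than any derivative control on $h_i$, is essential.
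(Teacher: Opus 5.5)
Your proposal is correct and is essentially the paper's argument. The energy identity forces $\|\nabla u_i\|_{L^2}\to0$, and the uniform De Giorgi--Nash--Moser H\"older bounds for the divergence-form equation upgrade this to uniform convergence.

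The differences are cosmetic. You normalize by $\max u_i=1$, so the $L^\infty$ bound is free, and your Harnack step is actually unnecessary because $\max u=1$ plus constancy already forces $u\equiv1$. You then conclude by Arzel\`a--Ascoli and identification of the limit, whereas the paper normalizes in $L^2$ and uses Poincar\'e with a direct $\varepsilon/2$-ball contradiction.
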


\begin{proof}
Normalize $u_i$ so that $\|u_i\|_{L^2(h_i)}=1$. Integrating \eqref{eq:zero-mode-equation} against $u_i$ gives
\[
  c_n\int_M|\nabla u_i|_{h_i}^2\dd\mu_{h_i}=-\int_MV_iu_i^2\dd\mu_{h_i}\leq\|V_i\|_{C^0}\longrightarrow0.
\]
Because $h_i\to g$ uniformly, the metrics are uniformly equivalent and have uniformly bounded Poincar\'e constants. Let $\overline u_i^{h_i}$ be the $h_i$-average of $u_i$ over $M$. Then $\|u_i-\overline u_i^{h_i}\|_{L^2(h_i)}\to0$. Since
\[
  1=\|u_i-\overline u_i^{h_i}\|_{L^2(h_i)}^2+\operatorname{Vol}_{h_i}(M)(\overline u_i^{h_i})^2
\]
and $\overline u_i^{h_i}>0$, we have $\overline u_i^{h_i}\to\operatorname{Vol}_g(M)^{-1/2}>0$.

In a fixed finite atlas, \eqref{eq:zero-mode-equation} is a uniformly elliptic equation in divergence form with uniformly bounded zeroth-order coefficients. The local boundedness and interior H\"older estimates \cite{GT}*{Theorems~8.17 and~8.24} give constants $\gamma\in(0,1)$ and $C>0$, independent of $i$, such that
\[
  \|u_i\|_{L^\infty(M)}+[u_i]_{C^{0,\gamma}(M)}\leq C.
\]
If $\|u_i-\overline u_i^{h_i}\|_{C^0}\not\to0$, then, after passing to a subsequence, $|u_i(x_i)-\overline u_i^{h_i}|\geq\varepsilon>0$ for some $x_i\in M$. The uniform H\"older bound implies $|u_i-\overline u_i^{h_i}|\geq\varepsilon/2$ on balls of uniform positive $h_i$-volume, contradicting $\|u_i-\overline u_i^{h_i}\|_{L^2(h_i)}\to0$. Thus $u_i/\overline u_i^{h_i}\to1$ uniformly.
\end{proof}

\begin{proof}[Proof of Theorem~\ref{thm:main} for closed $M$ and $\kappa=0$]
If $\Scal_{g_0} \equiv 0$, set $\widehat{g}_i = g_0$. Assume $\Scal_{g_0} \not\equiv 0$. The first eigenvalue of $L_{g}$ is given by
\begin{equation}\label{eq:rayleigh}
  \lambda_1(g) = \inf_{u \neq 0} \frac{\int_M \left(c_n |\nabla u|_g^2 + \Scal_g u^2\right) \dd\mu_g}{\int_M u^2 \dd\mu_g}.
\end{equation}
Since $\Scal_{g_0} \ge 0$ and $\Scal_{g_0} \not\equiv 0$, we know $\lambda_1(g_0) > 0$.

For each $i \ge 1$, apply Proposition~\ref{prop:global-path} to $g_0$ with $k_i = \Scal_{g_0} + \frac{1}{i}$, $\varepsilon_i = \frac{1}{i}$, and $\eta_i = \frac{1}{4i}$ to obtain metrics $(g_{i,s})_{s \in [0,1]}$ satisfying
\begin{align}
  g_{i,0} &= g_0, \label{eq:gi0} \\
  \sup_{s \in [0,1]} \|g_{i,s} - g_0\|_{C^0_{g_0}} &< \frac{1}{i}, \label{eq:gi-close} \\
  \left\| \Scal_{g_{i,s}} - \left[ (1-s^2)\Scal_{g_0} - \frac{s^2}{i} \right] \right\|_{C^0} &< \frac{1}{4i}. \label{eq:gi-Scal}
\end{align}
Since $g_0$ is fixed and $k_i$ is uniformly bounded, \eqref{eq:global-first-derivative} gives
\begin{equation}\label{eq:closed-zero-first-derivative}
  \sup_i\sup_{s\in[0,1]}\|g_{i,s}\|_{W^{1,\infty}(M;\overline g)}<\infty.
\end{equation}

At $s = 1$, bound \eqref{eq:gi-Scal} gives $\Scal_{g_{i,1}} < -\frac{3}{4i} < 0$. Testing \eqref{eq:rayleigh} with a constant function yields $\lambda_1(g_{i,1}) < 0$. Since $\lambda_1(g_{i,0}) > 0$, continuity in $s$ provides a root $s_i \in (0,1)$ with
\[
  \lambda_1(g_{i,s_i}) = 0.
\]

To show that $s_i \to 1$, fix $\delta \in (0,1)$ and set $c_\delta := 1 - (1-\delta)^2 > 0$. For $s \le 1 - \delta$, estimate \eqref{eq:gi-Scal} gives
\begin{equation}\label{eq:potential-lower}
  \Scal_{g_{i,s}} \ge c_\delta \Scal_{g_0} - \frac{5}{4i}.
\end{equation}
Let $\Lambda_\delta > 0$ be the first eigenvalue of $-c_n \Delta_{g_0} + c_\delta \Scal_{g_0}$. By \eqref{eq:gi-close}, we have, for every covector $\xi$ and every $i \ge 4n$, uniformly in $s$ and $x$,
\begin{align*}
    \left(1-\frac{2n}{i}\right)|\xi|_{g_0}^2 \le |\xi|_{g_{i,s}}^2 \le \left(1+\frac{2n}{i}\right)|\xi|_{g_0}^2,\\
    \left(1-\frac{2n}{i}\right)\dd\mu_{g_0} \le \dd\mu_{g_{i,s}} \le \left(1+\frac{2n}{i}\right)\dd\mu_{g_0}.
\end{align*}
Thus, for $s \le 1-\delta$, we bound the numerator in Rayleigh quotient using \eqref{eq:potential-lower}:
\begin{align*}
  {}&\int_M \left( c_n |\nabla u|_{g_{i,s}}^2 + \Scal_{g_{i,s}} u^2 \right) \dd\mu_{g_{i,s}}\\
  \ge{}& \left(1-\frac{2n}{i}\right)^2 \int_M \left( c_n |\nabla u|_{g_0}^2 + c_\delta \Scal_{g_0} u^2 \right) \dd\mu_{g_0} - \frac{5}{4i} \int_M u^2 \dd\mu_{g_{i,s}} \\
  \ge{}& \left[ \frac{\left(1 - \frac{2n}{i}\right)^2}{1 + \frac{2n}{i}} \Lambda_\delta - \frac{5}{4i} \right] \int_M u^2 \dd\mu_{g_{i,s}}.
\end{align*}
For large $i$, the bracketed coefficient is strictly positive, forcing $\lambda_1(g_{i,s}) > 0$ and $s_i > 1 - \delta$. Thus $s_i \longrightarrow 1$.

Set $h_i := g_{i,s_i}$. Then $h_i \to g_0$ uniformly in $C^0$, and 
\[
\|\Scal_{h_i}\|_{C^0} \le (1 - s_i^2)\|\Scal_{g_0}\|_{C^0} + \frac{5}{4i} \longrightarrow 0.
\]

Let $u_i>0$ be the first eigenfunction of $L_{h_i}$, satisfying $L_{h_i}u_i=0$. By Lemma~\ref{lem:positive-solutions}, normalize $u_i$ so that $u_i\to1$ uniformly. Equation \eqref{eq:closed-zero-first-derivative}, $\Scal_{h_i}\to0$, and Lemma~\ref{lem:conformal-regularity} give $u_i\to1$ in $C^1$. Setting $\widehat g_i:=u_i^{4/(n-2)}h_i$, we obtain $\Scal_{\widehat g_i}=0$, $\widehat g_i\to g_0$ uniformly, and a uniform $W^{1,\infty}$-bound. Lemma~\ref{lem:holder-compactness} completes the proof.
\end{proof}
\subsection{The open case with \texorpdfstring{$\kappa=0$}{}}\label{sec:open-flat}

For a Riemannian metric $h$, define the quadratic form
\begin{equation}\label{eq:quadratic-form}
  \mathcal{Q}_h[u] := \int_M \left(c_n |\nabla u|_h^2 + \Scal_h u^2\right) \dd\mu_h, \qquad u \in C_c^\infty(M).
\end{equation}

\begin{definition}\label{def:parabolic}
A metric $h$ on a noncompact manifold $M$ is \emph{parabolic} if, for every compact set $K \Subset M$, there exist cutoff functions $\chi_j \in C_c^\infty(M)$ satisfying
\[
  0 \le \chi_j \le 1, \qquad \chi_j \equiv 1 \text{ near } K, \qquad \int_M |\nabla \chi_j|_h^2 \dd\mu_h \longrightarrow 0 \quad \text{as } j \to \infty.
\]
No completeness is assumed in this definition.
\end{definition}

\begin{lemma}\label{lem:parabolicization}
Let $M$ be a connected, noncompact manifold, and let $g$ be a smooth metric with $\Scal_g\geq0$ and $\Scal_g\not\equiv0$. For every compact set $K\Subset M$, there exists a positive smooth function $\varphi$ such that $q:=\varphi^{4/(n-2)}g$ is parabolic, $q=g$ near $K$, and $\Scal_q\geq0$ with $\Scal_q\not\equiv0$.
\end{lemma}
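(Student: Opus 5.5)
The plan is to look for $q$ as a conformal change $q=\varphi^{4/(n-2)}g$ with $\varphi\equiv1$ near $K$, and to read off both requirements from \eqref{eq:conformal-formula}. That formula gives $\Scal_q=\varphi^{-\frac{n+2}{n-2}}L_g\varphi$. So $\Scal_q\geq0$ holds as soon as $\varphi$ is a positive supersolution, $-c_n\Delta_g\varphi+\Scal_g\varphi\geq0$.

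For parabolicity, note that $|\nabla\chi|_q^2=\varphi^{-4/(n-2)}|\nabla\chi|_g^2$ and $\dd\mu_q=\varphi^{2n/(n-2)}\dd\mu_g$. Hence
\[
  \int_M|\nabla\chi|_q^2\dd\mu_q=\int_M\varphi^2|\nabla\chi|_g^2\dd\mu_g .
\]
It therefore suffices to make $\varphi$ decay so fast along an exhaustion that this weighted Dirichlet energy of suitable cutoffs tends to zero. Nonvanishing of $\Scal_q$ is handled first: enlarge $K$ to a compact set containing a point where $\Scal_g>0$. Since $q=g$ near this larger set, $\Scal_q\not\equiv0$ is automatic.

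\textbf{Step 1: the exhaustion and the cutoffs.} Fix a smooth exhaustion $K\subset\Omega_0\Subset\Omega_1\Subset\cdots$ of $M$ by precompact domains with smooth boundary. Fix cutoffs $\chi_m\in C_c^\infty(\Omega_{m+1})$ with $\chi_m\equiv1$ on $\Omega_m$ and $0\le\chi_m\le1$. Set $E_m:=\int_M|\nabla\chi_m|_g^2\dd\mu_g<\infty$; these numbers are fixed before $\varphi$ is chosen.

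\textbf{Step 2: the target for $\varphi$.} Suppose we can arrange $\varphi\le\delta_m$ on $\Omega_{m+1}\setminus\Omega_m$, where the $\delta_m$ are chosen with $\delta_m^2E_m\le 2^{-m}$. Then $\int\varphi^2|\nabla\chi_m|_g^2\dd\mu_g\to0$. Given any compact set, $\chi_m\equiv1$ near it for $m$ large, so $q$ is parabolic in the sense of Definition~\ref{def:parabolic}.

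\textbf{Step 3: building the supersolution.} The real task is to produce a smooth positive supersolution of $L_g$ that equals $1$ near $K$ and is at most $\delta_m$ on the $m$-th shell, with the $\delta_m$ decreasing arbitrarily fast.
\begin{itemize}
\item Since $\Scal_g\geq0$, every positive constant is a supersolution, and the maximum principle holds for $L_g$ on bounded domains.
\item On each annulus $A_m:=\Omega_{m+1}\setminus\overline\Omega_{m-1}$, let $w_m$ solve $L_gw_m=0$ with boundary values $w_m=\delta_{m-2}$ on $\pr\Omega_{m-1}$ and $w_m=\delta_{m+1}$ on $\pr\Omega_{m+1}$. By the strong maximum principle, $w_m$ takes values strictly between its boundary values in the interior.
\item Define $\varphi$ as an infimum of the overlapping pieces: $\varphi:=\min(1,w_1,w_2,\dots)$, where each $w_m$ is extended by $+\infty$ off $\overline A_m$. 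This should be locally a finite minimum, and a minimum of supersolutions is a (weak/viscosity) supersolution.
\item For $\varphi$ to be locally a minimum of genuine supersolutions, each $w_m$ must be strictly larger than some competing piece near $\pr A_m$. Near $\pr\Omega_{m-1}$ we have $w_m\approx\delta_{m-2}$, which is much larger than $w_{m-1}$ there. This is where $\delta_m\ll\delta_{m-1}$ and continuity up to the boundary are used. Near $\pr\Omega_{m+1}$, $w_m\approx\delta_{m+1}$ while $w_{m+1}$ is larger; there the constant $\delta_{m+1}$ or $w_{m+2}$ must take over.
\item Arranging this overlap bookkeeping consistently is the delicate combinatorial point. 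I would first try to replace the boundary data by slightly shifted constants so that the minimum is attained strictly in the interior of every annulus used.
\end{itemize}

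\textbf{Step 4: smoothing.} Finally $\varphi$ must be smoothed. The plan is to smooth only near the kink hypersurfaces. Away from $K$ we can afford a small strictly positive defect: first replace $\varphi$ by a \emph{strict} supersolution, for instance by applying the construction to $L_g-\epsilon\rho$ with $\rho\geq0$ supported away from $K$, or by multiplying each piece by $1+$ small. A sufficiently fine mollification then preserves $L_g\varphi\geq0$ while keeping $\varphi\equiv1$ near $K$ and $\varphi\le2\delta_m$ on the shells.

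\textbf{Main obstacle.} I expect Step 3, together with its smoothing in Step 4, to be the main difficulty: gluing local $L_g$-harmonic pieces into one global smooth positive supersolution with prescribed, rapidly decaying upper bounds. The parabolicity and curvature conclusions are then immediate from the two identities displayed at the start.
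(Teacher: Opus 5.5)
There is a genuine gap in Step~3. Your reduction agrees with the paper's: the identity $\int_M|\nabla\chi|_q^2\dd\mu_q=\int_M\varphi^2|\nabla\chi|_g^2\dd\mu_g$, positive supersolutions for $\Scal_q\geq0$, and enlarging $K$ to get $\Scal_q\not\equiv0$. But the goal of Step~3 is impossible, not merely delicate: a positive supersolution of $L_g$ equal to $1$ near $K$ cannot decay arbitrarily fast.

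To see this, choose a smooth precompact domain $\Omega\supset K$ on whose closure $\varphi\equiv1$, and an exhaustion $\Omega\Subset D_1\Subset D_2\Subset\cdots$. Let $v_j$ solve $L_gv_j=0$ in $D_j\setminus\overline\Omega$, with $v_j=1$ on $\pr\Omega$ and $v_j=0$ on $\pr D_j$. Because $\Scal_g\geq0$, the maximum principle applied to $\varphi-v_j$ gives $\varphi\geq v_j$. Hence $\varphi\geq v:=\lim_jv_j>0$. So $\delta_m\geq\sup_{\Omega_{m+1}\setminus\Omega_m}v$ is dictated by $(M,g,\Omega)$, not free. The requirement $\delta_m^2E_m\leq2^{-m}$ then fails for suitable choices of exhaustion and cutoffs in Step~1: very thin shells make $E_m$ huge while $v$ stays bounded below on them.

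Your gluing runs into exactly this. Let $\theta$ be $L_g$-harmonic on $A_{m+1}$ with $\theta=1$ on $\pr\Omega_m$ and $\theta=0$ on $\pr\Omega_{m+2}$. Comparison gives $w_{m+1}\geq\delta_{m-1}\theta\geq\theta_m\delta_{m-1}$ on $\pr\Omega_{m+1}$, where $\theta_m:=\min_{\pr\Omega_{m+1}}\theta>0$ depends only on the geometry. On $\pr\Omega_{m+1}$ you also have $w_{m+2}=\delta_m>\delta_{m+1}$. So preventing $w_m$ from being the minimum at its outer boundary forces $\delta_{m+1}>\theta_m\delta_{m-1}$. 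No shifting of constants can beat the minimum principle.

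Thus parabolicity cannot come from making $\varphi$ small on prescribed shells. It must come from cutoffs adapted to the slowest admissible decay, namely to $v$, and this is the missing idea. The paper sets $\varphi:=1$ on $\Omega$ and $\varphi:=F(v)$ outside, with $F$ concave, $F\equiv1$ near $1$, $F(t)\leq At$ and $F(t)-tF'(t)\geq0$. Then $L_gF(v)=-c_nF''(v)|\nabla v|_g^2+\Scal_g\bigl(F(v)-vF'(v)\bigr)\geq0$, and $\varphi$ is automatically smooth across $\pr\Omega$. No gluing or smoothing (your Step~4) is needed.

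Parabolicity is then proved with the adapted cutoffs $\chi_j:=\eta(v_j/v)$ and the ground-state identity
\[
  \int_{D_j\setminus\overline\Omega}v^2\left|\nabla\frac{v_j}{v}\right|_g^2\dd\mu_g
  =\int_{\pr\Omega}\left(\pr_\nu v_j-\pr_\nu v\right)\dd\sigma_g\longrightarrow0,
\]
where $\nu$ points into $\Omega$. The interior term vanishes because $v\Delta_gv_j-v_j\Delta_gv=0$. The limit uses $v_j\to v$ in $C^1$ up to $\pr\Omega$. Together with $\varphi\leq Av$, this gives $\int_M\varphi^2|\nabla\chi_j|_g^2\dd\mu_g\to0$.
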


\begin{proof}
Choose a connected precompact smooth domain $\Omega\supset K$ containing a point where $\Scal_g>0$. By adjoining its finitely many relatively compact complementary components and smoothing the boundary if necessary, we may assume that $M\setminus\overline\Omega$ has no relatively compact components. Choose a smooth connected exhaustion $\overline\Omega\Subset\Omega_1\Subset\Omega_2\Subset\cdots\Subset M$ such that every component of $A_j:=\Omega_j\setminus\overline\Omega$ meets both $\partial\Omega$ and $\partial\Omega_j$. Classical Dirichlet theory \cite{GT}*{Theorem~6.14} gives a smooth solution of
\begin{equation}\label{eq:exterior-dirichlet}
  L_gv_j=0\quad\text{in }A_j,\qquad v_j=1\quad\text{on }\partial\Omega,\qquad v_j=0\quad\text{on }\partial\Omega_j.
\end{equation}
The weak and strong maximum principles \cite{GT}*{Theorems~3.1 and~3.5} give $0<v_j<1$, and comparison gives $v_j\leq v_{j+1}$. The interior Schauder estimates \cite{GT}*{Theorem~6.2 and Corollary~6.3}, the boundary estimate on a fixed collar of $\partial\Omega$ \cite{GT}*{Corollary~6.7}, and interior bootstrapping \cite{GT}*{Theorem~6.17} give a smooth limit
\[
  v_j\longrightarrow v\quad\text{locally in }C^\infty(M\setminus\overline\Omega)\quad\text{and in }C^1\text{ up to }\partial\Omega,
\]
where
\begin{equation}\label{eq:minimal-potential}
  L_gv=0,\qquad 0<v\leq1,\qquad v=1\quad\text{on }\partial\Omega.
\end{equation}
Choose a smooth function $F:[0,1]\to[0,1]$ satisfying, for some $A\geq1$,
\begin{equation}\label{eq:concave-F}
\begin{cases}
  F(0)=0,\qquad F\equiv1\text{ near }1,\qquad 0<F(t)\leq At\quad\text{on }(0,1],\\
  F'\geq0,\qquad F''\leq0,\qquad F(t)-tF'(t)\geq0\quad\text{on }[0,1].
\end{cases}
\end{equation}
For example, let $\vartheta:[0,1]\to[0,1]$ be smooth and nonincreasing with $\vartheta\equiv1$ near $0$ and $\vartheta\equiv0$ near $1$, and set
\[
  F(t):=A\int_0^t\vartheta(r)\dd r,\qquad A:=\left(\int_0^1\vartheta(r)\dd r\right)^{-1}.
\]
Define $\varphi:=1$ on $\Omega$ and $\varphi:=F(v)$ on $M\setminus\Omega$. Since $F\equiv1$ near $1$, the function $\varphi$ is smooth across $\partial\Omega$, and $0<\varphi\leq Av$ on $M$, where $v$ is extended by $1$ on $\Omega$. In particular, $q:=\varphi^{4/(n-2)}g$ agrees with $g$ near $K$. On $\Omega$,
\[
  \Scal_q=\varphi^{-\frac{n+2}{n-2}}L_g\varphi=L_g(1)=\Scal_g\geq0,
\]
and $\Scal_q\not\equiv0$ because $\Omega$ contains a point where $\Scal_g>0$. On $M\setminus\overline\Omega$, \eqref{eq:minimal-potential} and \eqref{eq:concave-F} give
\begin{align*}
  \varphi^{\frac{n+2}{n-2}}\Scal_q
  &=L_gF(v)\\
  &=-c_n\Delta_gF(v)+\Scal_gF(v)\\
  &=-c_nF''(v)|\nabla v|_g^2-c_nF'(v)\Delta_gv+\Scal_gF(v)\\
  &=-c_nF''(v)|\nabla v|_g^2+\Scal_g\bigl(F(v)-vF'(v)\bigr)\geq0.
\end{align*}

Finally, set
\[
  w_j:=
  \begin{cases}
    1 & \text{on }\Omega,\\
    v_j/v & \text{on }A_j,\\
    0 & \text{on }M\setminus\Omega_j.
  \end{cases}
\]
Fix a smooth nondecreasing function $\eta:[0,1]\to[0,1]$ with $\eta\equiv0$ near $0$ and $\eta\equiv1$ on $[1/2,1]$, and set $\chi_j:=\eta(w_j)$. Since $\eta$ is constant near $0$ and $1$, we have $\chi_j\in C_c^\infty(M)$ and $0\leq\chi_j\leq1$. Moreover, $\chi_j\equiv1$ near every fixed compact set for all sufficiently large $j$.

Let $\nu_j$ be the outward unit normal of $\partial A_j$ and set $C_\eta:=A^2\|\eta'\|_\infty^2$. Using $\varphi\leq Av$, $w_j=0$ on $\partial\Omega_j$, $w_j=v_j=v=1$ on $\partial\Omega$, and $L_gv_j=L_gv=0$, we obtain
\begin{align*}
  \int_M|\nabla\chi_j|_q^2\dd\mu_q
  &=\int_M\varphi^2|\nabla\chi_j|_g^2\dd\mu_g\\
  &\leq A^2\int_Mv^2|\nabla\chi_j|_g^2\dd\mu_g\\
  &\leq C_\eta\int_{A_j}v^2|\nabla w_j|_g^2\dd\mu_g\\
  &=-C_\eta\int_{A_j}w_j\operatorname{div}_g\left(v^2\nabla\left(\frac{v_j}{v}\right)\right)\dd\mu_g+C_\eta\int_{\partial A_j}w_jv^2\partial_{\nu_j}\left(\frac{v_j}{v}\right)\dd\sigma_g\\
  &=-C_\eta\int_{A_j}w_j\left(v\Delta_gv_j-v_j\Delta_gv\right)\dd\mu_g+C_\eta\int_{\partial\Omega}\left(\partial_{\nu_j}v_j-\partial_{\nu_j}v\right)\dd\sigma_g\\
  &=C_\eta\int_{\partial\Omega}\left(\partial_{\nu_j}v_j-\partial_{\nu_j}v\right)\dd\sigma_g\longrightarrow0.
\end{align*}
The last limit follows from the $C^1$ convergence of $v_j$ to $v$ up to $\partial\Omega$. Thus $q$ is parabolic by Definition~\ref{def:parabolic}.
\end{proof}

The next lemma supplies the weighted spectral gap needed along the scalar curvature decreasing path and the positive solutions used for the conformal correction. The first assertion is the subcritical alternative for nonnegative Schr\"odinger operators; see \cite{PinchoverTintarev}*{Theorem~1.4}. We include a direct proof.

\begin{lemma}\label{lem:parabolic-tools}
Let $q$ be a parabolic metric on a connected, noncompact manifold $M$ with $\Scal_q\geq0$ and $\Scal_q\not\equiv0$. There exists a positive smooth function $\beta$ on $M$ such that
\begin{equation}\label{eq:weighted-gap}
  \int_M\beta u^2\dd\mu_q\leq\mathcal Q_q[u]\qquad\text{for all }u\in C_c^\infty(M).
\end{equation}
Moreover, suppose $h_i\to q$ in $C^0_{\mathrm{loc}}$, $\Scal_{h_i}\to0$ in $C^0_{\mathrm{loc}}$, and $\mathcal Q_{h_i}\geq0$ on $C_c^\infty(M)$ for every $i$. For every $p\in M$, there exist positive smooth functions $u_i$ such that
\[
  L_{h_i}u_i=0,\qquad u_i(p)=1,\qquad u_i\longrightarrow1\quad\text{in }C^0_{\mathrm{loc}}.
\]
\end{lemma}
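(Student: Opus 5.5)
Both assertions rest on the quadratic form \eqref{eq:quadratic-form}; parabolicity is used to show that any ``ground state'' of $\mathcal Q_q$ is forced to be constant, which $\Scal_q\not\equiv0$ forbids.

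\emph{Weighted gap.} I argue by contradiction on compact pieces. Fix an exhaustion by compact sets $K_m$ and a ball $B$ on which $\Scal_q\geq c_0>0$. The core claim is that for every compact $K$ there is $\delta_K>0$ with
\[
  \delta_K\int_Ku^2\dd\mu_q\leq\mathcal Q_q[u]\qquad\text{for all }u\in C_c^\infty(M).
\]
Given this, set $\beta:=\sum_m 2^{-m}\delta_{K_m}\chi_{K_m}$, smoothed from below; this is positive and smooth, and summing the inequalities gives \eqref{eq:weighted-gap}.

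To prove the claim, suppose $u_k\in C_c^\infty$ satisfy $\int_Ku_k^2=1$ and $\mathcal Q_q[u_k]\to0$. Since $\Scal_q\geq0$, both $\int|\nabla u_k|^2\to0$ and $\int\Scal_qu_k^2\to0$. The second gives $\int_Bu_k^2\to0$. On any connected precompact domain $D\supset K\cup B$, the Poincar\'e inequality then yields $u_k-\bar u_k\to0$ in $L^2(D)$. Because $\int_B u_k^2\to 0$, the means satisfy $\bar u_k\to0$, hence $\int_Ku_k^2\to0$, a contradiction. Curiously, parabolicity is not needed for this part.

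\emph{Positive solutions.} Since $\mathcal Q_{h_i}\geq0$, the Allegretto--Piepenbrink / Agmon theorem gives a positive solution of $L_{h_i}u=0$. Concretely, I solve Dirichlet problems $L_{h_i}u=0$ on an exhaustion $\Omega_m$ with a small positive source, normalize at $p$, and pass to the limit using the Harnack inequality. This produces $u_i>0$ with $L_{h_i}u_i=0$ and $u_i(p)=1$.

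Next I need $u_i\to1$ in $C^0_{\mathrm{loc}}$. The $h_i$ are locally uniformly equivalent and $\Scal_{h_i}$ is locally bounded, so the local Harnack inequality and De Giorgi--Nash estimates (\cite{GT}*{Theorems~8.20, 8.24}) give locally uniform bounds and H\"older bounds. Hence a subsequence converges in $C^0_{\mathrm{loc}}$ to a weak solution $u\geq0$ of $-c_n\Delta_qu=0$ with $u(p)=1$, and by Harnack $u>0$. It remains to show $u\equiv1$; uniqueness of this limit then gives convergence of the whole sequence.

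\emph{Main obstacle.} The limit $u$ is a positive harmonic function for $q$, and a parabolic metric admits none except constants. Even so, the passage to the limit needs care. A ground-state/energy argument is cleaner, so I plan to arrange the construction so that $u_i$ is a ground state. Using the Doob transform, $\mathcal Q_{h_i}[u_i\phi]=c_n\int u_i^2|\nabla\phi|^2$ for $\phi\in C_c^\infty$. Taking $\phi=\chi_j$ from parabolicity and letting $i\to\infty$ should show that the limit $u$ satisfies $\int u^2|\nabla\chi_j|^2\to0$. Combined with the Caccioppoli inequality, this forces $\nabla u=0$.

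A more robust route treats the limit $u$ directly as a positive $q$-harmonic function on a parabolic manifold. Then $\log u$ satisfies $\int|\nabla\log u|^2\chi^2\leq4\int|\nabla\chi|^2$. Inserting the parabolic cutoffs $\chi_j$ gives $\nabla\log u=0$, so $u\equiv u(p)=1$. I expect this last step to need the most care, because $q$ is only the $C^0$ limit metric; the Caccioppoli inequality must be done weakly, which is fine since $q$ is smooth.
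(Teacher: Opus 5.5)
Your proposal is correct and follows essentially the same route as the paper. Part one proves local coercivity on compact sets from the Poincar\'e inequality and $\Scal_q>0$ on a ball, then sums with decaying weights to get $\beta$; you are right that parabolicity is unused there. Part two takes Dirichlet approximations on an exhaustion (the paper uses $(L_{h_i}+j^{-1})z=0$ with boundary value $1$), applies uniform Harnack and De Giorgi--Nash bounds, and forces $u\equiv1$ via $\int\chi^2|\nabla\log u|_q^2\leq4\int|\nabla\chi|_q^2$ with parabolic cutoffs. The Doob-transform detour is unnecessary, and as sketched $\int u^2|\nabla\chi_j|^2\to0$ would not follow without control of $u$ at infinity. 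What you should make explicit, as the paper does, is the uniform $H^1_{\mathrm{loc}}$ bound obtained by testing $L_{h_i}u_i=0$ against $\eta^2u_i$, since that bound is what justifies passing to the weak limit equation $\Delta_qu=0$.
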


\begin{proof}
Choose a connected precompact smooth domain $B\Subset M$ on which $\Scal_q>0$. Choose a smooth locally finite partition of unity $\{\rho_j\}$ with compact supports and a smooth connected exhaustion $B\Subset D_1\Subset D_2\Subset\cdots\Subset M$ such that $\operatorname{supp}\rho_j\Subset D_j$.

For every $j$, there exists $\kappa_j>0$ such that
\begin{equation}\label{eq:local-coercivity}
  \mathcal Q_q[u]\geq\kappa_j\int_{D_j}u^2\dd\mu_q\qquad\text{for all }u\in C_c^\infty(M).
\end{equation}
Otherwise, there are $u_\ell\in C_c^\infty(M)$ such that $\int_{D_j}u_\ell^2\dd\mu_q=1$ and $\mathcal Q_q[u_\ell]\to0$. Let $\overline{u}^q_{\ell,j}$ be the $q$-average of $u_\ell$ on $D_j$. Since $\Scal_q\geq0$, the Poincar\'e inequality gives
\[
  \int_{D_j}|u_\ell-\overline{u}^q_{\ell,j}|^2\dd\mu_q\leq\frac{C_j}{c_n}\mathcal Q_q[u_\ell]\longrightarrow0.
\]
Moreover,
\[
  1=\int_{D_j}|u_\ell-\overline{u}^q_{\ell,j}|^2\dd\mu_q+(\overline{u}^q_{\ell,j})^2\mu_q(D_j),
\]
so, after passing to a subsequence, $\overline{u}^q_{\ell,j}\to a\neq0$, as $\ell\rightarrow \infty$. Since $B\Subset D_j$,
\[
  \mathcal Q_q[u_\ell]\geq\int_B\Scal_qu_\ell^2\dd\mu_q\longrightarrow a^2\int_B\Scal_q\dd\mu_q>0,
\]
a contradiction. This proves \eqref{eq:local-coercivity}.

Set $\beta:=\sum_{j=1}^\infty2^{-j}\kappa_j\rho_j$. Then $\beta>0$ is smooth, and
\[
  \int_M\beta u^2\dd\mu_q\leq\sum_{j=1}^\infty2^{-j}\kappa_j\int_{D_j}u^2\dd\mu_q\leq\sum_{j=1}^\infty2^{-j}\mathcal Q_q[u]=\mathcal Q_q[u].
\]

For the second assertion, fix $p\in M$ and a smooth connected exhaustion $p\in\Omega_1\Subset\Omega_2\Subset\cdots\Subset M$. Since $\mathcal Q_{h_i}\geq0$, the first Dirichlet eigenvalue satisfies
\[
  \lambda_1(L_{h_i}+j^{-1};\Omega_j)=\inf_{\zeta\in H_0^1(\Omega_j)\setminus\{0\}}\frac{\mathcal Q_{h_i}[\zeta]+j^{-1}\int_{\Omega_j}\zeta^2\dd\mu_{h_i}}{\int_{\Omega_j}\zeta^2\dd\mu_{h_i}}\geq\frac1j.
\]
Variational Dirichlet theory \cite{GT}*{Section~8.2} therefore gives a unique smooth solution
\[
  (L_{h_i}+j^{-1})z_{i,j}=0\quad\text{in }\Omega_j,\qquad z_{i,j}=1\quad\text{on }\partial\Omega_j.
\]
Integration against the negative part of $z_{i,j}$ gives $z_{i,j}\geq0$, and the Harnack inequality \cite{GT}*{Theorem~8.20} gives $z_{i,j}>0$. Set $u_{i,j}:=z_{i,j}/z_{i,j}(p)$. For fixed $i$, the Harnack inequality and interior elliptic estimates \cite{GT}*{Theorems~6.2 and~6.17} give, after passing to a diagonal subsequence, a positive smooth limit $u_i$ satisfying $ L_{h_i}u_i=0$ and $u_i(p)=1$.

We now let $i\to\infty$. On every connected compact set containing $p$, a finite chain of overlapping coordinate balls and the uniform Harnack inequality propagate $u_i(p)=1$ to uniform positive upper and lower bounds. The Harnack constants are uniform because $h_i\to q$ locally uniformly and $\Scal_{h_i}\to0$ in $C^0_{\mathrm{loc}}$, which give common local ellipticity and zeroth-order bounds. The local H\"older estimate \cite{GT}*{Theorems~8.20 and~8.24} then gives uniform local $C^{0,\alpha}$-bounds for $u_i$. Integrating $L_{h_i}u_i=0$ against $\eta^2u_i$, where $\eta\in C_c^\infty(M)$, gives
\[
  \int_M\eta^2|\nabla u_i|_{h_i}^2\dd\mu_{h_i}\leq C\int_M\left(u_i^2|\nabla\eta|_{h_i}^2+|\Scal_{h_i}|\eta^2u_i^2\right)\dd\mu_{h_i}.
\]
Thus $u_i$ is uniformly bounded in $H^1_{\mathrm{loc}}$. Every subsequence has a further subsequence converging locally uniformly and weakly in $H^1_{\mathrm{loc}}$ to a positive function $u$ with $u(p)=1$. For every $\psi\in C_c^\infty(M)$,
\[
  0=\int_M\left(c_n\langle\nabla u_i,\nabla\psi\rangle_{h_i}+\Scal_{h_i}u_i\psi\right)\dd\mu_{h_i}\longrightarrow c_n\int_M\langle\nabla u,\nabla\psi\rangle_q\dd\mu_q.
\]
Hence $\Delta_qu=0$, and $u$ is smooth by elliptic regularity. Integrating $\Delta_qu=0$ against $\chi^2/u$, where $\chi\in C_c^\infty(M)$, gives
\begin{align*}
    \int_M\chi^2|\nabla\log u|_q^2\dd\mu_q&=2\int_M\chi\langle\nabla\log u,\nabla\chi\rangle_q\dd\mu_q\\
    &\leq\frac12\int_M\chi^2|\nabla\log u|_q^2\dd\mu_q+2\int_M|\nabla\chi|_q^2\dd\mu_q.
\end{align*}
Therefore,
\[
  \int_M\chi^2|\nabla\log u|_q^2\dd\mu_q\leq4\int_M|\nabla\chi|_q^2\dd\mu_q.
\]
For every $K\Subset M$, applying this inequality to parabolic cutoff functions $\chi_k\equiv1$ near $K$ gives $\int_K|\nabla\log u|_q^2\dd\mu_q=0$. Thus $u\equiv u(p)=1$. Since every subsequential limit is $1$, we conclude that $u_i\to1$ in $C^0_{\mathrm{loc}}$.
\end{proof}

\begin{proposition}\label{prop:parabolic-case}
Let $q$ be a parabolic metric on a connected, noncompact manifold $M$ satisfying $\Scal_q\geq0$ and $\Scal_q\not\equiv0$. Then $q$ is a limit in the local Riemannian Burnett compactness class of smooth scalar-flat metrics.
\end{proposition}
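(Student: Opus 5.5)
We will imitate the closed case, replacing the first eigenvalue by the weighted spectral gap of Lemma~\ref{lem:parabolic-tools} and the first eigenfunction by the positive solutions constructed there. Let $\beta>0$ be the weight from \eqref{eq:weighted-gap}. For each $i$, apply Proposition~\ref{prop:global-path} to $q$ with $k_i:=\Scal_q+\frac1i\beta'$, where $\beta'>0$ is a small smooth positive function (e.g. $\beta'\leq\beta$, bounded), and with $\varepsilon_i,\eta_i$ positive continuous functions decaying fast enough that the metrics $g_{i,s}$ are $(1\pm\delta_i)$-equivalent to $q$ with $\delta_i\to0$ uniformly on $M$, and $|\Scal_{g_{i,s}}-(1-s^2)\Scal_q+\frac{s^2}{i}\beta'|<\eta_i\leq \frac{1}{4i}\beta'$. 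By \eqref{eq:global-first-derivative} the family is locally uniformly bounded in $W^{1,\infty}$ (uniformly in $i,s$).

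Next we locate a critical parameter. Set $s_i:=\sup\{s\in[0,1]:\mathcal Q_{g_{i,t}}\geq0\text{ on }C_c^\infty(M)\text{ for all }t\leq s\}$. At $s=0$ we have $\mathcal Q_q\geq\int\beta u^2$. At $s=1$, $\Scal_{g_{i,1}}<0$ everywhere, and parabolicity of $q$ (hence of the uniformly equivalent $g_{i,1}$) lets us test with parabolic cutoffs $\chi_j$: $\mathcal Q_{g_{i,1}}[\chi_j]\leq C\int|\nabla\chi_j|^2+\int\Scal_{g_{i,1}}\chi_j^2\to\text{(negative)}$, so $s_i<1$. Since nonnegativity is a closed condition in $s$ (pointwise in $u$, with smooth dependence on $s$), $\mathcal Q_{h_i}\geq0$ for $h_i:=g_{i,s_i}$. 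To see $s_i\to1$, fix $\delta$; for $s\leq1-\delta$ we have $\Scal_{g_{i,s}}\geq c_\delta\Scal_q-\frac{5}{4i}\beta'$, and the gap \eqref{eq:weighted-gap}, applied to $-c_n\Delta_q+c_\delta\Scal_q$ (which dominates $c_\delta\mathcal Q_q\geq c_\delta\int\beta u^2$), together with the uniform equivalence, gives $\mathcal Q_{g_{i,s}}\geq (c_\delta(1-C\delta_i)-\frac{C}{i})\int\beta u^2\geq0$ for large $i$, using $\beta'\leq\beta$. This is the step where the weight must be carefully chosen so that the perturbation error is controlled by the weighted gap; choosing $\beta'\le\beta$ and $\eta_i\le\beta'/(4i)$ is exactly what makes it work. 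Hence $\Scal_{h_i}\to0$ in $C^0_{\mathrm{loc}}$ (indeed $|\Scal_{h_i}|\leq(1-s_i^2)\Scal_q+\frac{2}{i}\beta'$) and $h_i\to q$ in $C^0$.

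Now the second part of Lemma~\ref{lem:parabolic-tools} applies to $h_i$: it gives positive $u_i$ with $L_{h_i}u_i=0$ and $u_i\to1$ in $C^0_{\mathrm{loc}}$. By Lemma~\ref{lem:conformal-regularity} (with $\kappa=0$) and the local $W^{1,\infty}$ bound, $u_i\to1$ in $C^1_{\mathrm{loc}}$ and $\widehat g_i:=u_i^{4/(n-2)}h_i$ is scalar flat by \eqref{eq:conformal-formula}, converges to $q$ in $C^0_{\mathrm{loc}}$, and is locally bounded in $W^{1,\infty}$. The main subtlety I expect is the closedness/attainment at $s_i$ (we only need $\mathcal Q_{h_i}\geq0$, not an exact zero eigenvalue, which is why Lemma~\ref{lem:parabolic-tools} was set up with the nonnegativity hypothesis) and verifying that $s_i<1$ via parabolicity, which requires the uniform equivalence of $g_{i,1}$ with $q$ on all of $M$ so that the cutoffs' Dirichlet energies still tend to zero.
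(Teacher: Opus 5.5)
Your proposal is correct and follows the paper's proof essentially step for step. It uses the same path from Proposition~\ref{prop:global-path} with decrement $\Scal_q+i^{-1}\beta$; your bounded $\beta'\leq\beta$ and $\eta_i\leq\beta'/(4i)$ are harmless variants of the paper's $\beta$ and $\beta/(4i^2)$. It also has the same threshold $s_i$ where $\mathcal Q_{q_{i,s}}$ stops being nonnegative, parabolic cutoffs to get negativity at $s=1$ (fixing $i$ and letting $j\to\infty$, equivalent to the paper's choice of $\chi_i$), the weighted gap \eqref{eq:weighted-gap} to force $s_i\to1$, and then Lemmas~\ref{lem:parabolic-tools} and~\ref{lem:conformal-regularity} for the conformal correction.
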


\begin{proof}
Let $\beta>0$ be given by Lemma~\ref{lem:parabolic-tools}, so that
\begin{equation}\label{eq:weighted-gap-recalled}
  \int_M\beta u^2\dd\mu_q\leq\mathcal Q_q[u]
  \qquad\text{for all }u\in C_c^\infty(M).
\end{equation}
For each $i\geq1$, apply Proposition~\ref{prop:global-path} with
\[
  k_i:=\Scal_q+\frac1i\beta,\qquad
  \varepsilon_i:=\frac1i,\qquad
  \eta_i:=\frac{1}{4i^2}\beta.
\]
This gives metrics $(q_{i,s})_{s\in[0,1]}$ satisfying $q_{i,0}=q$ and
\begin{align}
  |q_{i,s}-q|_q&<\frac1i,\label{eq:parabolic-path-metric}\\
  \left|\Scal_{q_{i,s}}-\left((1-s^2)\Scal_q-\frac{s^2}{i}\beta\right)\right|
  &<\frac{1}{4i^2}\beta.\label{eq:parabolic-path-scalar}
\end{align}
Since $q$ is fixed and $k_i$ is locally uniformly bounded, \eqref{eq:global-first-derivative} gives for every $K\Subset M$
\begin{equation}\label{eq:parabolic-path-first-derivative}
  \sup_i\sup_{s\in[0,1]}
  \|q_{i,s}\|_{W^{1,\infty}(K;\overline g)}<\infty.
\end{equation}

For $i\geq4n$, \eqref{eq:parabolic-path-metric} places the relative eigenvalues of $q_{i,s}$ with respect to $q$ in $[1-i^{-1},1+i^{-1}]$. Comparing the inverse metrics and volume forms gives
\begin{align*}
  \left(1-\frac{2n}{i}\right)\dd\mu_q
  &\leq\dd\mu_{q_{i,s}}
  \leq\left(1+\frac{2n}{i}\right)\dd\mu_q,\\
  \left(1-\frac{2n}{i}\right)|\nabla u|_q^2\dd\mu_q
  &\leq|\nabla u|_{q_{i,s}}^2\dd\mu_{q_{i,s}}
  \leq\left(1+\frac{2n}{i}\right)|\nabla u|_q^2\dd\mu_q.
\end{align*}

We first use $\beta>0$ to make the quadratic form negative at $s=1$. By \eqref{eq:parabolic-path-scalar},
\[
  \Scal_{q_{i,1}}
  \leq-\left(\frac1i-\frac{1}{4i^2}\right)\beta.
\]
Fix a nonempty precompact open set $B\Subset M$ and set $b:=\int_B\beta\dd\mu_q>0$. By parabolicity, choose $\chi_i\in C_c^\infty(M)$ equal to $1$ near $\overline B$ such that
\[
  c_n\int_M|\nabla\chi_i|_q^2\dd\mu_q<\frac{b}{8i}.
\]
Since $\int_M\beta\chi_i^2\dd\mu_q\geq b$, for all sufficiently large $i$,
\begin{align}
  \mathcal Q_{q_{i,1}}[\chi_i]
  &\leq\left(1+\frac{2n}{i}\right)c_n
  \int_M|\nabla\chi_i|_q^2\dd\mu_q -\left(1-\frac{2n}{i}\right)
  \left(\frac1i-\frac{1}{4i^2}\right)
  \int_M\beta\chi_i^2\dd\mu_q\notag\\
  &<\left(1+\frac{2n}{i}\right)\frac{b}{8i}
  -\left(1-\frac{2n}{i}\right)
  \left(\frac1i-\frac{1}{4i^2}\right)b<0.
  \label{eq:negative-endpoint}
\end{align}
Thus the negative potential $-i^{-1}\beta$ dominates the arbitrarily small Dirichlet energy of the parabolic cutoff.

At $s=0$, we have $\mathcal Q_{q_{i,0}}=\mathcal Q_q\geq0$ because $\Scal_q\geq0$. Define
\begin{equation}\label{eq:first-loss}
  s_i:=\inf\left\{s\in[0,1]:
  \mathcal Q_{q_{i,s}}[u]<0
  \text{ for some }u\in C_c^\infty(M)\right\}.
\end{equation}
By \eqref{eq:negative-endpoint} and continuity in $s$, we have $s_i<1$ and
\begin{equation}\label{eq:threshold-nonnegative}
  \mathcal Q_{q_{i,s_i}}[u]\geq0
  \qquad\text{for all }u\in C_c^\infty(M).
\end{equation}

We next use the weighted gap to prove that $s_i\to1$. Fix $\delta\in(0,1)$. For $s\leq1-\delta$, the metric comparisons and \eqref{eq:parabolic-path-scalar} give
\begin{align}
  \mathcal Q_{q_{i,s}}[u]
  \geq{}&
  \left(1-\frac{2n}{i}\right)
  \int_M\left(c_n|\nabla u|_q^2+(1-s^2)\Scal_q u^2\right)\dd\mu_q\notag\\
  &-\left(1+\frac{2n}{i}\right)
  \left(\frac1i+\frac{1}{4i^2}\right)
  \int_M\beta u^2\dd\mu_q\notag\\
  \geq{}&
  \left[
  \left(1-\frac{2n}{i}\right)(1-s^2)
  -\left(1+\frac{2n}{i}\right)
  \left(\frac1i+\frac{1}{4i^2}\right)
  \right]\mathcal Q_q[u].
  \label{eq:threshold-lower}
\end{align}
Indeed, \eqref{eq:weighted-gap-recalled} absorbs the $\beta$-term into $\mathcal Q_q$, while $\Scal_q\geq0$ gives
\[
  c_n|\nabla u|_q^2+(1-s^2)\Scal_q u^2
  \geq(1-s^2)
  \left(c_n|\nabla u|_q^2+\Scal_q u^2\right).
\]
Since $1-s^2\geq1-(1-\delta)^2>0$, the coefficient in \eqref{eq:threshold-lower} is positive for all sufficiently large $i$. Hence $\mathcal Q_{q_{i,s}}\geq0$ for every $s\leq1-\delta$, so $s_i\geq1-\delta$. Therefore $s_i\to1$.

Set $h_i:=q_{i,s_i}$. Equations \eqref{eq:parabolic-path-metric} and \eqref{eq:parabolic-path-scalar} give $h_i\to q$, $\Scal_{h_i}\to$ in $C^0_{\mathrm{loc}}$.
By \eqref{eq:threshold-nonnegative}, $\mathcal Q_{h_i}\geq0$. Lemma~\ref{lem:parabolic-tools} gives positive smooth functions $u_i$ satisfying
\[
  L_{h_i}u_i=0,\qquad
  u_i\longrightarrow1
  \quad\text{in }C^0_{\mathrm{loc}}.
\]
Equation \eqref{eq:parabolic-path-first-derivative} and Lemma~\ref{lem:conformal-regularity} give $u_i\to1$ in $C^1_{\mathrm{loc}}$. Therefore $\widehat q_i:=u_i^{4/(n-2)}h_i$ is scalar-flat, converges to $q$ in $C^0_{\mathrm{loc}}$, and is locally bounded in $W^{1,\infty}$. Lemma~\ref{lem:holder-compactness} completes the proof.
\end{proof}

We now prove Theorem~\ref{thm:main} for open $M$ when $\kappa=0$.

\begin{proof}[Proof of Theorem~\ref{thm:main} for open $M$ and $\kappa=0$]
If $\Scal_{g_0}\equiv0$, take $\widehat g_i:=g_0$ for every $i$. We assume $\Scal_{g_0}\not\equiv0$. Choose a compact exhaustion $K_1\Subset K_2\Subset\cdots\Subset M$. For each $i$, Lemma~\ref{lem:parabolicization} gives a parabolic metric $q_i$ satisfying
\[
  \Scal_{q_i}\ge0,\qquad \Scal_{q_i}\not\equiv0,\qquad q_i=g_0\quad\text{near }K_{i+1}.
\]
Let $\beta_i$ be the positive smooth function associated with $q_i$ given by Lemma~\ref{lem:parabolic-tools}. Since $q_i=g_0$ near $K_{i+1}$, the metrics $q_i$ are locally uniformly equivalent and bounded in $C^1_{\mathrm{loc}}$. For the pairs $(i,m)$ satisfying $\beta_i/m\leq1$ on $K_{i+1}$, the decrements $\Scal_{q_i}+m^{-1}\beta_i$ are bounded in $C^0_{\mathrm{loc}}$. We therefore use the uniform assertion in Proposition~\ref{prop:global-path} when applying the construction in Proposition~\ref{prop:parabolic-case} to these pairs. It gives scalar-flat metrics $\widehat q_{i,m}=u_{i,m}^{4/(n-2)}h_{i,m}$ such that, for each fixed $i$,
\[
  \widehat q_{i,m}\longrightarrow q_i\quad\text{in }C^0_{\mathrm{loc}},\qquad u_{i,m}\longrightarrow1\quad\text{in }C^1_{\mathrm{loc}}.
\]
Choose $m(i)$ sufficiently large that
\[
  \frac{\beta_i}{m(i)}\leq1\quad\text{on }K_{i+1},\qquad \|\widehat q_{i,m(i)}-q_i\|_{C^0(K_i)}<\frac1i,\qquad \|u_{i,m(i)}-1\|_{C^1(K_i)}<\frac1i,
\]
and set $\widehat g_i:=\widehat q_{i,m(i)}$. Then $\widehat g_i\to g_0$ in $C^0_{\mathrm{loc}}$. For every $K\Subset M$, we have $K\subset K_i$ for all sufficiently large $i$, and hence
\[
  \sup_i\|h_{i,m(i)}\|_{W^{1,\infty}(K;\overline g)}<\infty,\qquad
  \sup_i\|u_{i,m(i)}\|_{C^1(K)}<\infty.
\]
Hence, $\widehat g_i=u_{i,m(i)}^{4/(n-2)}h_{i,m(i)}$ satisfies \eqref{eq:main-uniform-first-derivative}, and Lemma~\ref{lem:holder-compactness} completes the proof.
\end{proof}

We remark that this construction need not preserve completeness, since the parabolicizing conformal factor may tend to zero at infinity.
\section{Proof of Theorem~\ref{thm:main} when \texorpdfstring{$\kappa<0$}{kappa < 0}}\label{sec:negative-case}

When $\kappa<0$, only the endpoint of Proposition~\ref{prop:global-path} is needed. Constant sub- and supersolutions then give the conformal correction.

\begin{proof}[Proof of Theorem~\ref{thm:main} when $\kappa<0$]
Fix $g_0\in\MM_{\ge\kappa}(M)$. Apply Proposition~\ref{prop:global-path} with
\[
  k_i:=\Scal_{g_0}-\kappa+\frac{1}{i},\qquad \varepsilon_i:=\frac{1}{i},\qquad \eta_i:=\frac{1}{4i},
\]
and take $s=1$. Since $k_i>0$, this gives a smooth metric $h_i$ such that
\[
  \sup_M|h_i-g_0|_{g_0}<\frac{1}{i},\qquad \left|\Scal_{h_i}-\left(\kappa-\frac{1}{i}\right)\right|<\frac{1}{4i}.
\]
Since $g_0$ is fixed and $k_i$ is locally uniformly bounded, \eqref{eq:global-first-derivative} allows the metrics $h_i$ to be chosen locally uniformly bounded in $W^{1,\infty}$.
In particular,
\begin{equation}\label{eq:scalar-error-explicit}
  -\frac{5}{4i}<\Scal_{h_i}-\kappa<-\frac{3}{4i}<0.
\end{equation}

We seek a positive function $u_i$ such that $\widehat g_i:=u_i^{4/(n-2)}h_i\in \MM_{\kappa}(M)$. By the conformal formula, it is enough to solve
\[
  F_i(u_i):=-c_n\Delta_{h_i}u_i+\Scal_{h_i}u_i-\kappa u_i^{\frac{n+2}{n-2}}=0.
\]
For $i>-5/(2\kappa)$, define
\begin{equation}\label{eq:sub-super-def}
  \underline u_i:=\left(1+\frac{5}{2\kappa i}\right)^{\frac{n-2}{4}},\qquad
  \overline u_i:=\left(1-\frac{5}{2\kappa i}\right)^{\frac{n-2}{4}}.
\end{equation}
Since $\kappa<0$, we have $0<\underline u_i<1<\overline u_i$.  Because $\underline u_i$ and $\overline u_i$ are constant, \eqref{eq:scalar-error-explicit} gives
\begin{align*}
  F_i(\underline u_i)
  &=\underline u_i\left(\Scal_{h_i}-\kappa-\frac{5}{2i}\right)
  <-\frac{13}{4i}\underline u_i<0,\\
  F_i(\overline u_i)
  &=\overline u_i\left(\Scal_{h_i}-\kappa+\frac{5}{2i}\right)
  >\frac{5}{4i}\overline u_i>0.
\end{align*}
Thus $\underline u_i$ and $\overline u_i$ are strict sub- and supersolutions.

If $M$ is closed, choose $A_i>0$ so that $t\mapsto(A_i-\Scal_{h_i})t+\kappa t^{\frac{n+2}{n-2}}$ is nondecreasing on $[\underline u_i,\overline u_i]$. Starting with $u_i^{(0)}:=\underline u_i$, solve successively
\[
  (-c_n\Delta_{h_i}+A_i)u_i^{(m+1)}=(A_i-\Scal_{h_i})u_i^{(m)}+\kappa(u_i^{(m)})^{\frac{n+2}{n-2}}.
\]
The maximum principle and the strict barriers give $\underline u_i\leq u_i^{(m)}\leq u_i^{(m+1)}\leq\overline u_i$. Elliptic compactness gives a smooth limit $u_i$ satisfying $F_i(u_i)=0$ and $\underline u_i\leq u_i\leq\overline u_i$.

If $M$ is open, let $\Omega_1\Subset\Omega_2\Subset\cdots\Subset M$ be a smooth exhaustion. For fixed $i$ and $j$, choose $A_{i,j}>0$ with the same monotonicity property on $\overline\Omega_j$ and perform the preceding iteration with boundary value $1$. Since $\underline u_i<1<\overline u_i$, this gives a smooth solution
\[
  F_i(u_{i,j})=0\quad\text{in }\Omega_j,\qquad u_{i,j}=1\quad\text{on }\partial\Omega_j,\qquad \underline u_i\leq u_{i,j}\leq\overline u_i.
\]
For fixed $i$, interior elliptic estimates and a diagonal subsequence give a smooth global solution $u_i$ of $F_i(u_i)=0$ satisfying the same bounds.

In either case, \eqref{eq:sub-super-def} gives
\[
  \sup_M|u_i-1|\le\max\{1-\underline u_i,\overline u_i-1\}\longrightarrow0.
\]
Lemma~\ref{lem:conformal-regularity} gives $u_i\to1$ in $C^1_{\mathrm{loc}}$. The conformal formula \eqref{eq:conformal-formula} gives $\Scal_{\widehat g_i}=\kappa$, while $\widehat g_i\to g_0$ in $C^0_{\mathrm{loc}}$ and $\{\widehat g_i\}$ is bounded in $W^{1,\infty}_{\mathrm{loc}}$. Lemma~\ref{lem:holder-compactness} proves the required weak-* convergence.

If, in addition, $g_0$ is complete, then,
\[
  \left(1+\frac{5}{2\kappa i}\right)\left(1-\frac1i\right)g_0
  \leq\widehat g_i\leq
  \left(1-\frac{5}{2\kappa i}\right)\left(1+\frac1i\right)g_0.
\]
Thus $\widehat g_i$ is bilipschitz to $g_0$ and hence complete.
\end{proof}
\section{Proof of Theorem~\ref{thm:main} for closed manifolds when \texorpdfstring{$\kappa>0$}{kappa > 0}}\label{sec:positive-closed}

When $\kappa>0$, only the endpoint of Proposition~\ref{prop:global-path} is needed. A small rescaling avoids the kernel of the linearized conformal equation, after which a contraction argument gives the conformal correction.

\begin{lemma}\label{lem:positive-correction}
Let $g$ be a smooth metric on $M$ satisfying
\begin{equation}\label{eq:positive-nonresonance}
  \frac{\kappa}{n-1}\notin\Spec(-\Delta_g).
\end{equation}
Suppose that $h_i$ are smooth metrics satisfying
\[
  h_i\longrightarrow g\quad\text{in }C^0(M),\qquad
  \|\Scal_{h_i}-\kappa\|_{C^0}\longrightarrow0.
\]
Then, for all sufficiently large $i$, there exist smooth functions $u_i>0$ satisfying
\begin{equation}\label{eq:positive-yamabe-equation}
  L_{h_i}u_i=\kappa u_i^{\frac{n+2}{n-2}}
\end{equation}
and $u_i\to1$ uniformly. Consequently, $\widehat g_i:=u_i^{4/(n-2)}h_i$ satisfies $\Scal_{\widehat g_i}=\kappa$ and converges uniformly to $g$.
\end{lemma}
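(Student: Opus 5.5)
The plan is to write $u_i=1+v_i$ and solve \eqref{eq:positive-yamabe-equation} by a fixed-point argument around $v=0$, in a function space that sees only $C^0$-information about $h_i$. Since $h_i$ converges to $g$ only in $C^0$, I would avoid Schauder spaces. Instead I would work in $W^{1,p}(M)$ with $p>n$, which embeds in $C^0$, and treat $-\Delta_{h_i}$ as a divergence-form operator $W^{1,p}\to W^{-1,p}$.

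\textbf{Step 1: the equation for $v$.} Expanding \eqref{eq:positive-yamabe-equation} around $u=1$ gives
\[
  P_iv=-(\Scal_{h_i}-\kappa)(1+v)+\kappa N(v),\qquad P_i:=-c_n\Delta_{h_i}-\frac{4\kappa}{n-2},
\]
where $N(v):=(1+v)^{\frac{n+2}{n-2}}-1-\frac{n+2}{n-2}v$. Here I used $\kappa\frac{n+2}{n-2}-\kappa=\frac{4\kappa}{n-2}$, and I kept the constant $\kappa$ in the potential, moving $\Scal_{h_i}-\kappa$ to the right-hand side. Since $\frac{4\kappa}{(n-2)c_n}=\frac{\kappa}{n-1}$, we have $P_g=c_n\bigl(-\Delta_g-\frac{\kappa}{n-1}\bigr)$. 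Under \eqref{eq:positive-nonresonance}, $P_g$ is injective on $W^{1,p}$, because any $W^{1,p}$ kernel element is a smooth eigenfunction by elliptic regularity. Being self-adjoint and Fredholm of index zero from $W^{1,p}$ to $W^{-1,p}$ for the smooth metric $g$, $P_g$ is therefore an isomorphism.

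\textbf{Step 2: uniform invertibility of $P_i$ (the main obstacle).} This is the only place where the weak convergence $h_i\to g$ in $C^0$ matters. I would multiply the equation by $\mu_i/\mu_g$, the ratio of volume densities in a fixed atlas. In divergence form this gives
\[
  \widetilde P_iv=-c_n\mu_g^{-1}\partial_j\bigl(\mu_ih_i^{jk}\partial_kv\bigr)-\frac{4\kappa}{n-2}\frac{\mu_i}{\mu_g}v,
\]
and the right-hand side is multiplied by $\mu_i/\mu_g$ as well. The coefficients $\mu_ih_i^{jk}$ and $\mu_i/\mu_g$ converge uniformly to those of $P_g$. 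Hence $\|\widetilde P_i-P_g\|_{W^{1,p}\to W^{-1,p}}\leq C\|h_i-g\|_{C^0}\to0$; no derivatives of $h_i$ enter, which is the point of the divergence form. A Neumann series then gives $\|\widetilde P_i^{-1}\|_{W^{-1,p}\to W^{1,p}}\leq 2\|P_g^{-1}\|$ for large $i$. Should the global divergence form be awkward across charts, I would fall back on the $W^{1,p}$ estimate for divergence-form operators with continuous coefficients combined with a compactness argument. That fallback needs equicontinuity of the coefficients of $h_i$, which follows from uniform convergence to the continuous limit $g$.

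\textbf{Step 3: contraction.} Define
\[
  T_i(v):=\widetilde P_i^{-1}\Bigl[\tfrac{\mu_i}{\mu_g}\bigl(-(\Scal_{h_i}-\kappa)(1+v)+\kappa N(v)\bigr)\Bigr]
\]
on the ball $\{\|v\|_{W^{1,p}}\leq r\}$, with $r$ small so that $\|v\|_{C^0}\leq1/2$. On this ball $|N(v)|\leq C|v|^2$ and $|N(v)-N(w)|\leq C(|v|+|w|)|v-w|$ pointwise. Using $L^\infty\subset W^{-1,p}$ and the embedding $W^{1,p}\subset C^0$, we get $\|T_i(v)\|\leq C\|\Scal_{h_i}-\kappa\|_{C^0}+Cr^2$ and $\operatorname{Lip}(T_i)\leq C\|\Scal_{h_i}-\kappa\|_{C^0}+Cr$. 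Take $r$ small and then $i$ large. This gives a unique fixed point $v_i$ with $\|v_i\|_{W^{1,p}}\leq C\|\Scal_{h_i}-\kappa\|_{C^0}\to0$, hence $v_i\to0$ uniformly.

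\textbf{Step 4: conclusion.} The fixed point $v_i$ is a weak solution of an equation whose coefficients are smooth for each fixed $i$, so bootstrapping shows $u_i=1+v_i$ is smooth. It is positive since $|v_i|\leq1/2$. By \eqref{eq:conformal-formula}, $\widehat g_i:=u_i^{4/(n-2)}h_i$ has $\Scal_{\widehat g_i}=\kappa$. Finally, $u_i\to1$ and $h_i\to g$ uniformly, so $\widehat g_i\to g$ uniformly.
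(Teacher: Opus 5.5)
Your proof is correct, but it takes a different route from the paper's.

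\textbf{The paper's route.} It works entirely in $C^0$.
First, it obtains a uniform $L^2$ spectral gap for $A_{h_i}=-c_n(\Delta_{h_i}+\frac{\kappa}{n-1})$. This comes from the min--max convergence $\lambda_\ell(-\Delta_{h_i})\to\lambda_\ell(-\Delta_g)$, which needs only uniform comparison of Dirichlet energies and volume forms.
Second, it upgrades the $L^2$ inverse bound to a uniform $C^0\to C^0$ bound. This uses the De Giorgi--Nash--Moser local boundedness estimate, whose constants depend only on ellipticity.
The contraction is then run on a small ball in $C^0$.

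\textbf{Your route.} You perturb off the single smooth operator $P_g$.
The operator $\vartheta_iP_i=-c_n\operatorname{div}_g(\vartheta_i\nabla^{h_i}\,\cdot\,)-\frac{4\kappa}{n-2}\vartheta_i$, with $\vartheta_i=d\mu_{h_i}/d\mu_g$, is chart-independent. So the fallback you mention is unnecessary.
Its distance to $P_g$ in $\mathcal L(W^{1,p},W^{-1,p})$ is $O(\|h_i-g\|_{C^0})$, and a Neumann series gives uniform invertibility.

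\textbf{What your route buys.}
It replaces both the eigenvalue-convergence argument and De Giorgi--Nash--Moser with standard $L^p$ theory for the fixed smooth metric $g$.
It uses the nonresonance hypothesis only through invertibility of $P_g$.
It yields the stronger estimate $\|u_i-1\|_{W^{1,p}}\le C\|\Scal_{h_i}-\kappa\|_{C^0}$, and hence H\"older convergence of $u_i$.

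\textbf{One step to rephrase.} Your justification that $P_g:W^{1,p}\to W^{-1,p}$ is an isomorphism needs adjusting. $P_g$ is not self-adjoint as a map between these spaces. Instead, since $p>2$ gives $W^{-1,p}\subset W^{-1,2}$, you can solve in $W^{1,2}$ by the spectral theorem. Then upgrade to $W^{1,p}$ by $L^p$ regularity for smooth coefficients. Alternatively, invoke the $p$-independence of the Fredholm index.
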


\begin{proof}
Set $e_i:=\Scal_{h_i}-\kappa$ and write $u_i=1+v_i$. Define
\[
  Q(v):=(1+v)^{\frac{n+2}{n-2}}-1-\frac{n+2}{n-2}v.
\]
Then \eqref{eq:positive-yamabe-equation} is equivalent to
\begin{equation}\label{eq:positive-fixed-point}
  A_{h_i}v_i=-e_i-e_iv_i+\kappa Q(v_i),\qquad
  A_h:=-c_n\left(\Delta_h+\frac{\kappa}{n-1}\right).
\end{equation}

The eigenvalues of $A_h$ are
\[
  c_n\left(\lambda_\ell(-\Delta_h)-\frac{\kappa}{n-1}\right),\qquad \ell\in\mathbb N_0.
\]
Uniform convergence of $h_i$ to $g$ gives uniform comparison of the corresponding volume forms and Dirichlet energies. The min--max characterization therefore gives $\lambda_\ell(-\Delta_{h_i})\longrightarrow\lambda_\ell(-\Delta_g)$ for every $\ell$, as well as $\lambda_\ell(-\Delta_{h_i})\geq c\lambda_\ell(-\Delta_g)$ with $c>0$ independent of $i$ and $\ell$. Thus only finitely many eigenvalues can lie in a fixed neighborhood of $\kappa/(n-1)$. Since the spectrum of $-\Delta_g$ is discrete, \eqref{eq:positive-nonresonance} gives $\delta>0$ such that
\[
  \operatorname{dist}\left(\frac{\kappa}{n-1},\Spec(-\Delta_{h_i})\right)\ge\delta
\]
for all sufficiently large $i$. Thus $A_{h_i}$ is invertible and
\begin{equation}\label{eq:positive-L2-inverse}
  \|A_{h_i}^{-1}f\|_{L^2(h_i)}
  \le\frac{1}{c_n\delta}\|f\|_{L^2(h_i)}.
\end{equation}

In local coordinates, after multiplication by $\sqrt{\det h_i}$, the leading coefficient matrix is $\sqrt{\det h_i}\,h_i^{ab}$. The $C^0$-convergence gives uniform bounds and a common ellipticity ratio for these matrices, as well as uniform volume comparison and uniformly bounded zeroth-order coefficients. The local boundedness estimate \cite{GT}*{Theorem~8.17}, applied in a fixed finite atlas, therefore gives
\[
  \|v\|_{C^0}\le C\left(\|v\|_{L^2(h_i)}+\|A_{h_i}v\|_{C^0}\right),
\]
where $C$ is independent of $i$. Applying this estimate to $v=A_{h_i}^{-1}f$ and using \eqref{eq:positive-L2-inverse} gives
\begin{equation}\label{eq:positive-C0-inverse}
  \|A_{h_i}^{-1}f\|_{C^0}\le C\|f\|_{C^0}.
\end{equation}

Set $E_i:=\|e_i\|_{C^0}$ and $\rho_i:=2CE_i$. For $|v|,|w|\le\rho\le1/2$, Taylor expansion gives
\[
  |Q(v)|\le C_n|v|^2,\qquad
  |Q(v)-Q(w)|\le C_n\rho|v-w|.
\]
Let $\BB_i:=\{v\in C^0(M):\|v\|_{C^0}\le\rho_i\}$ and define
\[
  T_i(v):=A_{h_i}^{-1}\bigl(-e_i-e_iv+\kappa Q(v)\bigr).
\]
For $v\in \BB_i$, estimate \eqref{eq:positive-C0-inverse} gives
\[
  \|T_i(v)\|_{C^0}
  \le C\left(E_i+E_i\rho_i+\kappa C_n\rho_i^2\right)
  \le\rho_i
\]
for all sufficiently large $i$. If $v,w\in \BB_i$, then
\[
  \|T_i(v)-T_i(w)\|_{C^0}
  \le C\left(E_i+\kappa C_n\rho_i\right)\|v-w\|_{C^0}
  \le\frac12\|v-w\|_{C^0}
\]
for all sufficiently large $i$. Thus $T_i$ is a contraction of $\BB_i$ into itself. Its fixed point $v_i$ satisfies $\|v_i\|_{C^0}\le\rho_i\longrightarrow0$. In particular, $u_i:=1+v_i>0$ for all sufficiently large $i$. The right-hand side of \eqref{eq:positive-fixed-point} lies in $L^p$ for every finite $p$, so elliptic regularity gives $v_i\in W^{2,p}$. Taking $p>n$ and then applying Schauder estimates iteratively gives $u_i\in C^\infty(M)$. By formula \eqref{eq:conformal-formula}, $\Scal_{\widehat g_i}=u_i^{-\frac{n+2}{n-2}}L_{h_i}u_i=\kappa$.
Finally, $u_i\to1$ and $h_i\to g$ uniformly, so $\widehat g_i\to g$ uniformly.
\end{proof}

\begin{proof}[Proof of Theorem~\ref{thm:main} for closed $M$ and $\kappa>0$] 
Fix $g_0\in\MM_{\ge\kappa}(M)$. For $t\in(0,1)$, set $g_t:=tg_0$. Then
\begin{equation}\label{eq:positive-scaling}
  \Scal_{g_t}=t^{-1}\Scal_{g_0}\ge t^{-1}\kappa>\kappa,\qquad
  \Spec(-\Delta_{g_t})=t^{-1}\Spec(-\Delta_{g_0}).
\end{equation}
Consequently,
\begin{equation}\label{eq:positive-resonant-scales}
  \frac{\kappa}{n-1}\in\Spec(-\Delta_{g_t})
  \quad\Longleftrightarrow\quad
  t=\frac{(n-1)\lambda_\ell(-\Delta_{g_0})}{\kappa}
\end{equation}
for some $\ell\in\mathbb N_0$. Only eigenvalues satisfying $\lambda_\ell(-\Delta_{g_0})<\kappa/(n-1)$ give scales in $(0,1)$. There are only finitely many such eigenvalues, so we may choose nonresonant scales $t_m\uparrow1$. 
For each $m,j\geq1$, apply Proposition~\ref{prop:global-path} to $g_m:=t_mg_0$ with decrement $k_m:=\Scal_{g_m}-\kappa>0$ and $\varepsilon=\eta=j^{-1}$, and retain only the endpoint $s=1$. Denoting this endpoint by $h_{m,j}$, we obtain
\begin{equation}\label{eq:positive-rough-approximation}
  \sup_M|h_{m,j}-g_m|_{g_m}<\frac{1}{j},\qquad
  \|\Scal_{h_{m,j}}-\kappa\|_{C^0}<\frac{1}{j}.
\end{equation}
Thus, for each fixed $m$, $h_{m,j}\to g_m$ in $C^0(M)$, $\Scal_{h_{m,j}}\to\kappa$ uniformly. Since $g_m$ are uniformly bounded in $C^1$, and $k_m$ is uniformly bounded in $C^0$. Hence \eqref{eq:global-first-derivative} allows the metrics $h_{m,j}$ to be chosen uniformly bounded in $C^1$. Since $t_m$ is nonresonant, Lemma~\ref{lem:positive-correction} gives functions $u_{m,j}\to1$ uniformly and metrics $\widehat g_{m,j}:=u_{m,j}^{4/(n-2)}h_{m,j}\in \MM_{\kappa}(M)$ satisfying $\widehat g_{m,j}\to g_m$ in $C^0(M)$ as $j\to\infty$. For each fixed $m$, Lemma~\ref{lem:conformal-regularity} gives $u_{m,j}\to1$ in $C^1(M)$. Choose $j(m)$ sufficiently large that
\[
  \|\widehat g_{m,j(m)}-g_m\|_{C^0_{g_0}}<\frac{1}{m},\qquad \|u_{m,j(m)}-1\|_{C^1}<\frac1m,
\]
and set $\widehat g_m:=\widehat g_{m,j(m)}$. Since $g_m=t_mg_0\to g_0$ uniformly, we obtain $\widehat g_m\to g_0$ in $C^0(M)$. The $C^1$-bounds above show that $\{\widehat g_m\}$ is bounded in $W^{1,\infty}(M)$. Lemma~\ref{lem:holder-compactness} completes the proof.
\end{proof}

\begin{remark}
The argument uses the discreteness of the Laplace spectrum to create a spectral gap at $\kappa/(n-1)$ by rescaling. On an open manifold, this value may lie in the essential spectrum, while parabolicization controls only the bottom of the spectrum.
\end{remark}

\end{document}